\newtheorem{theorem}{Theorem}[section]
\newtheorem{corollary}{Corollary}[theorem]
\newtheorem{definition}{Definition}[section]
\newtheorem{conjecture}{Conjecture}[section]
\newtheorem{remark}{Remark}[section]
\title{Lorenz equations and the figure eight knot}
\author{Yara Hatoom}
\date{\today}
\begin{document}

\begin{abstract}
Lorenz equations were first presented in 1963 by Edward Lorenz, they depend on three real positive parameters. For some of these parameters which are called T-points, there are two heteroclinic orbits connecting the three singular points in the equations. The heteroclinic connections can be extended into an invariant curve passing through infinity. 
We consider the system at the second T-point parameter, and develop a geometric model for the flow that simulates the Lorenz dynamics there.  We show that the model contains infinitely many periodic orbits, and that as knots they are all positive, prime and fibered.
\end{abstract}

\maketitle

\section{Introduction}

The Lorenz equations are a family of three-dimensional ordinary differential equations which model unpredictable behaviour mimicking phenomena found in weather. The equations are given by:

\begin{equation}
\begin{gathered}
\begin{aligned}
    &\frac{dx}{dt}=\sigma(y-x)\\
    &\frac{dy}{dt}=rx-y-xz\\
    &\frac{dz}{dt}=xy-\beta z\\   
\end{aligned}
\end{gathered}
\end{equation}

The parameters $\sigma$, $r$ and $\beta$ are physical parameters, all three of them are taken to be positive. The equations were first studied by Lorenz \cite{lorenz1963deterministic} especially at the parameter values $\sigma=10, \beta=\frac{8}{3}, r=28$ which are called the classical parameters. It has been numerically proven in \cite{tucker1999lorenz} that at the classical parameter values, the flow converges almost always onto the famous butterfly strange attractor.\\

The Lorenz system has a symmetry of rotation about the $z$-axis $(x,y,z) \rightarrow (-x,-y,z)$, and for $r>1$ the system has three singular points, the origin and the points $(\pm{\sqrt{\beta(r-1)}},\pm{\sqrt{\beta(r-1)}},r-1)$ which we denote by $p^+, p^-$. These points $p^\pm$ are called the wing centers, as at the classical parameter values, where we have a butterfly shaped attractor, the points $p^\pm$ are at the centers of the butterfly attractor wings.
For many parameter values, all three singularities are of saddle type, the origin has a one dimensional unstable manifold on the other hand $p^\pm$ have one dimensional stable manifold. For some parameters which are called T-points, there are two heteroclinic orbits which join the three fixed points, one flows from the origin to $p^+$ and the other flows from the origin to $p^-$.\\

By \cite{lorenz1963deterministic} (see also \cite{sparrow2012lorenz}), there exists an ellipsoid around the origin which is transverse to the flow so that orbits only enter the region bounded by the ellipsoid and never leave it. This allows us to compactify $\mathbb{R}^3$, adding $\infty$ to be a singular point that is a source, obtaining a $C^1$ flow on $S^3$.\\

At a T-point, we consider the closed curve which passes through the three singular points and $\infty$. The curve is defined as follows; take the heteroclinic connections which join the singular points $p^\pm$ with the origin, we can extend these symmetric connections to an invariant closed curve which passes through $\infty$ as has been shown in \cite{Pinsky2023}.\\

For each T-point this curve is a different type of knot. At the first T-point the curve is the trefoil knot. At the second T-point the curve is the figure-eight knot. The case with the trefoil knot has been studied, there is a geometric model and we know how to classify the periodic orbits of the flow. In this research we focus on the figure-eight knot case \ref{figI}.\\

\begin{figure}[!h]
    \centering
    \includegraphics[width=7cm]{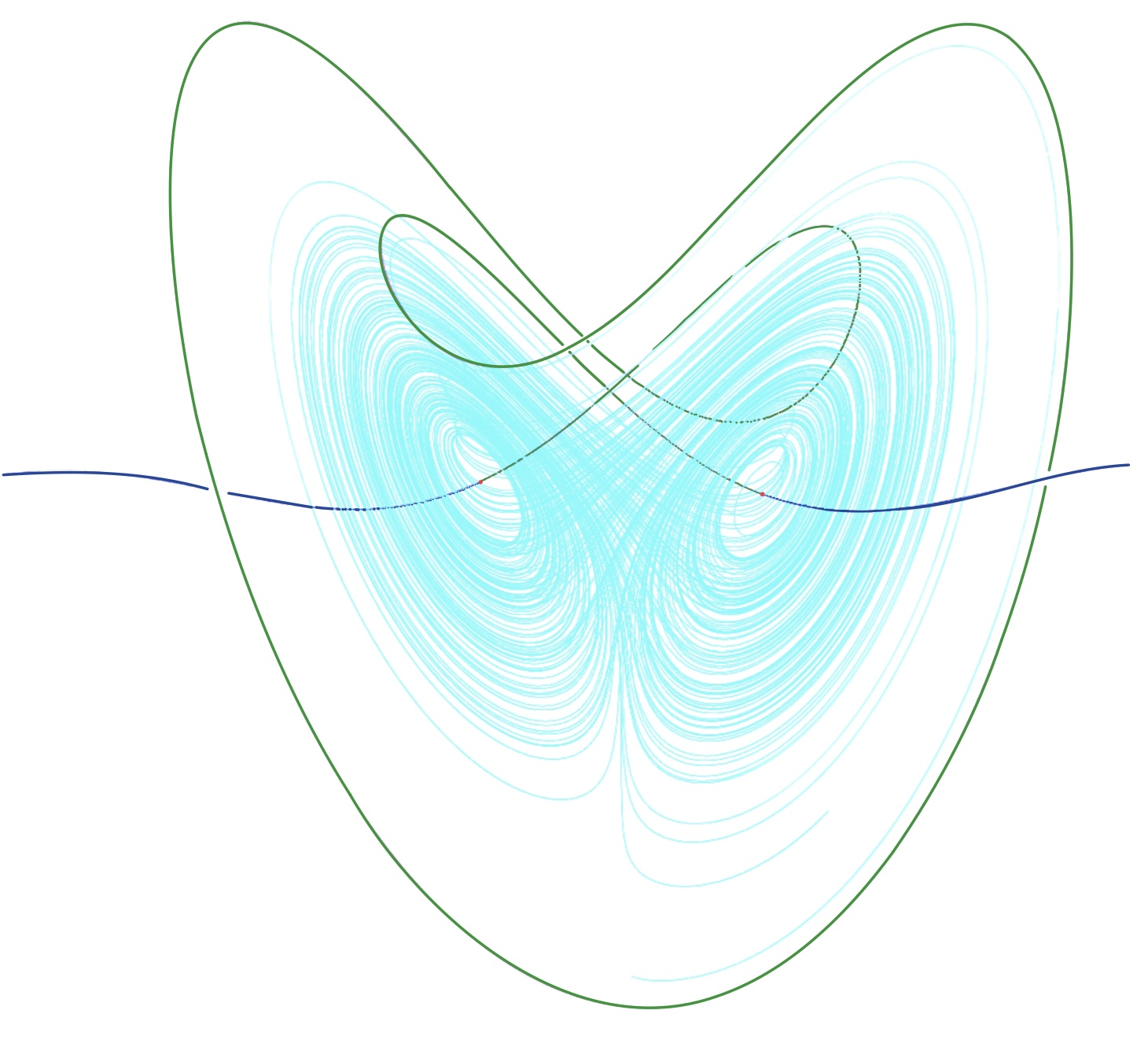}
    \caption{By numerical studies the second T-point is located at $(r,\sigma)\approx (85.0292,11.8279)$ for $\beta=\frac{8}{3}$}
    \label{figI}
\end{figure}

In this work we develop a geometric model for the second T-point parameter, by describing the first return map in the second T-point, obtaining a flow  defined on $S^3$ that is piecewise linear, has a global cross section, has four singularities with the same local behaviour as in Lorenz,  and contains a figure eight knot consisting of invariant heteroclinic orbits connecting all four singular points.

The periodic orbits for this geometric model are all contained in a two dimensional template, similar to the Lorenz template albeit more complicated. We prove the following theorem regarding the knot types of the periodic orbits which appear in the flow:

\begin{theorem}
    All knots appearing on the figure-eight template are positive, fibered and prime. 
\end{theorem}

Moreover, when removing the invariant figure eight knot one obtains a new flow on the figure-eight knot complement that is a hyperbolic plug, i.e. it has a hyperbolic non wandering set and the boundary torus is quasi transverse. Thus we prove the following:

\begin{theorem}
    The figure-eight knot complement has at least two non equivalent hyperbolic plugs. One with orientable foliations obtained as the DA of the suspension of the cat map, and the other with non-orientable foliations defined by the figure eight geometric model.
\end{theorem}
\section{Preliminaries}
\subsection{Three Dimensional Flows}

\begin{definition}
    An orbit which limits onto one singular point as $t \rightarrow \infty$ and limits onto a different singular point as $t \rightarrow -\infty$ is called a heteroclinic connection.
\end{definition}

\begin{definition}
    Given a flow, assume there exist heteroclinic connections between the singular points of the flow that construct a closed curve or can be extended into a closed curve, such a closed curve is called a heteroclinic knot.
\end{definition}

\begin{remark}
    In our case, we have two heteroclinic connections which can be extended into a closed heteroclinic knot passing through $\infty$.
\end{remark}

\begin{definition}
    Given a flow on a closed manifold $M$ we say that $M$ has a local cross section if there exists a closed codimension-one submanifold $N \subset M$ which transversely intersects the flow at every point of $N$.\\
    In the case where some orbits return to $N$ in a finite time, then for the subset $U \subset N$, which consists of the intersection of $N$ with the orbits which returns to $N$ in a finite time, there is a well defined first return map $r: U \rightarrow N$ which assigns to a point $p \in U$ its image $\phi ^ {T(p)}$, where $T(p)$ is the smallest $t>0$ such that $\phi (t) \in N$.
    \label{def1}
\end{definition}

\begin{remark}
    In definition \ref{def1}, the interior of $N$ must be $C^1$ but $N$ can have a boundary that may be piecewise differentiable.
\end{remark}

\subsection{Hyperbolic flows}
\begin{definition}
    An invariant set of a flow $\phi : \mathbb{R} \times M \rightarrow M$ is a set $\sigma \subset M$ such that $\phi ^t (\sigma) =\sigma$ for all $t\in \mathbb{R}$. 
\end{definition}

\begin{definition}
    An invariant set $\sigma$ for a flow $\phi _t$ on $M$ is hyperbolic if there exists a continuous $\phi _t$-invariant splitting of the tangent bundle $TM_\sigma$ into $E^s _\sigma \oplus E^u _\sigma \oplus E^c _\sigma$ with
        $$||D_{\phi _t (v)}|| \leq C e^{-\lambda t} ||v||,  \forall v\in E^s _\sigma, t>0,\\$$
        $$||D_{\phi _-t (v)}|| \leq C e^{-\lambda t} ||v||,  \forall v\in E^u _\sigma, t>0,\\$$    
        $$\frac{d\phi _t}{dt} |_{t=0} (x) \text{ spans } E^c _x,  \forall x\in \sigma\\$$
    for some fixed $C>0, \lambda>1$. The one dimensional direction $E^c _x$ is tangent to the orbit itself at each point.
\end{definition}

\begin{definition}
    Let $X \subset \sigma$ be a subset of a hyperbolic invariant set of a flow $\phi _t$ on $M$. Then the stable and unstable manifolds of $X$ in $M$ are given by
    $$W^s(X)={y \in M : lim_{t\rightarrow \infty}||\phi_t(X)-\phi_t(y)||=0},$$
    $$W^u(X)={y \in M : lim_{t\rightarrow -\infty}||\phi_t(X)-\phi_t(y)||=0},$$
\end{definition}

\begin{definition}
    Let $M$ be a closed smooth 3-manifold and let $\phi _ t : M \xrightarrow{} M$ be a flow generated by a non-singular $C^k$-vector field $X$, where $k \geq 1$. The flow is Anosov if $TM$ admits a hyperbolic splitting, for some given Riemannian metric on $M$.
\end{definition}

%\begin{theorem}(Boyland \cite{boyland1994topological})
%    Assume $\phi: M \rightarrow M$ is pseudo Anosov and $M$ is closed.\\
 %   (a) Each $x\in P_n(\phi)$ has a non-zero index and is alone in its $f^n$-Nielsen class. Thus its strong and periodic Nielsen classes are uncollapsible and essential and therefore isotopic stable\\
  %  (b) the homeomorphism $\phi$ is weak isotopy stable.
   % \label{thmpa}
%\end{theorem}

\subsection{Knots and templates}
In this section, most of the definitions are taken from the book \cite{ghrist2006knots}. 
Basic knot properties are taken from the book \cite{rolfsen2003knots}.

\begin{definition}
    A knot is an embedding $K:S^1 \hookrightarrow S^3$ of a 1-sphere into the 3-sphere up to isotopy. A link $L: \sqcup S^1 \hookrightarrow S^3$ is a disjoint, finite collection of knots.
\end{definition}

\begin{definition}
    A knot is called positive if it has a diagram in which all crossings are positive (see Figure \ref{fig0.2}). 
\end{definition}

\begin{remark}
    In this study, the sign of crossings is taken to be as in Figure \ref{fig0.2}, following the convention in braid theory.
\end{remark}

\begin{figure}[!h]
    \centering
    \includegraphics[width=6cm]{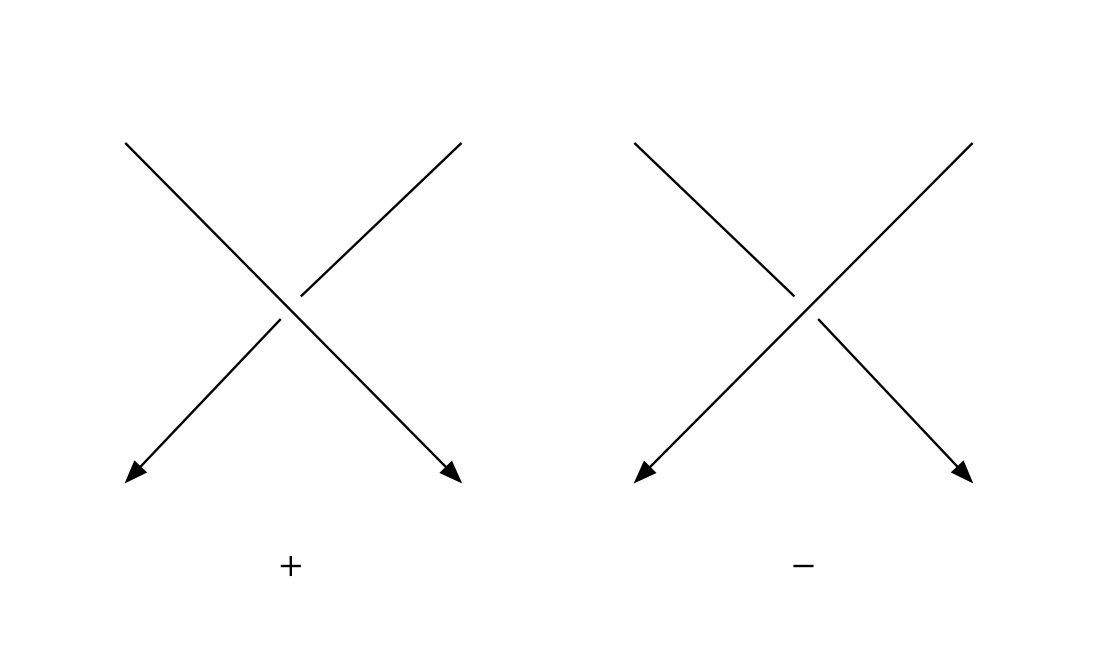}
    \caption{Sign of crossings}
    \label{fig0.2}
\end{figure}

\begin{definition}
    A flow $\phi : \mathbb{R} \times M \rightarrow M$ is called expansive if $\forall \epsilon >0$, $\exists \delta >0$ such that if $\parallel \phi_t (x) - \phi _{s(t)}(y) \parallel <\delta$, $\forall t \in \mathbb{R}$ for $x,y \in M$ and a continuous map $s: \mathbb{R} \rightarrow \mathbb{R}$ with $s(0)=0$, then $y=\phi _t (x)$ where $t<\epsilon$.
\end{definition}

\begin{definition}
    A template is a compact branched two-manifold with boundary and smooth expansive semiflow built locally from two types of charts: joining and splitting. Each chart, as illustrated in Figure \ref{fig0.1} carries a semiflow, endowing the template with an expanding semiflow, and the gluing maps between charts must respect the semiflow and act linearly on the edges which are the red lines in Figure \ref{fig0.1}.
\end{definition}

\begin{figure}[!h]
    \centering
    \includegraphics[width=10cm]{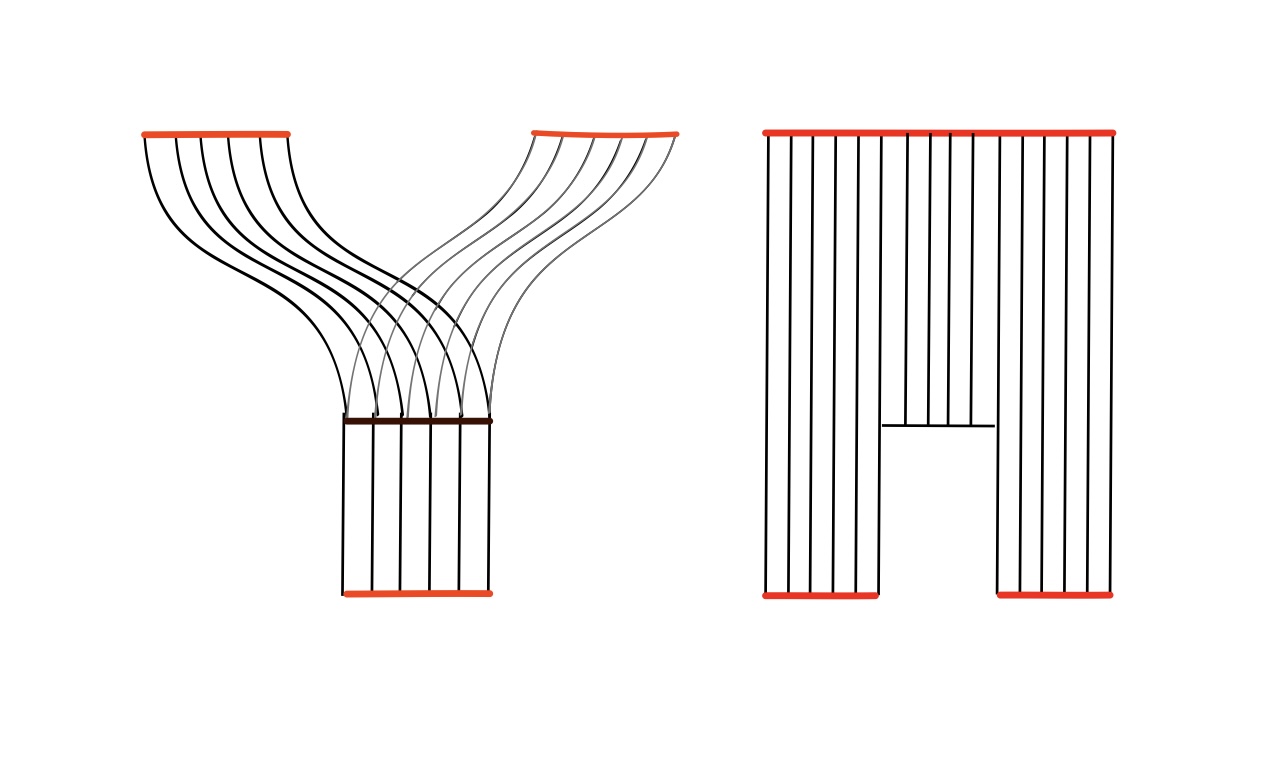}
    \caption{Joining and splitting charts}
    \label{fig0.1}
\end{figure}

\begin{definition}
    Given a flow $\phi _t$ on $M$, a point $x \in M$ is chain-recurrent for $\phi$ if, for any $\epsilon, T >0$, there exists a sequence of points ${x=x_1, x_2, ..., x_n=x}$ and real numbers ${t_1, t_2, ...,t_{n-1}}$ such that ${t_i >T}$ and $||\phi_{t_i}(x_i) - x_{i+1}||< \epsilon$ for all $1 \leq i \leq n-1$. The chain-recurrent set, $R(\phi)$, is the set of all chain-recurrent points on $M$. $R(\phi)$ is a compact set invariant under the flow.
\end{definition}

\begin{theorem}
    (the template theorem: Birman and Williams \cite{Birman1983}) Given a flow $\phi ^t$ on a three manifold $M$ having a hyperbolic chain-recurrent set, the link of periodic orbits $L_\phi$ is in bijective correspondence with the link of periodic orbits $L_\tau$ on a particular embedded template $\tau \subset M$. On any finite sublink, this is via ambient isotopy.
    \label{bw}
\end{theorem}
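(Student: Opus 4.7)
The plan is to prove the template theorem by the classical Birman--Williams collapsing construction: replace the hyperbolic chain-recurrent set by the quotient under its stable foliation, show that this quotient is an embedded template, and verify that the correspondence on periodic orbits is bijective and realized by an ambient isotopy on any finite sublink.

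First I would set up the local structure of the chain-recurrent set $R(\phi)$. Using hyperbolicity, at every point $x \in R(\phi)$ the tangent space splits as $E^s_x \oplus E^u_x \oplus E^c_x$, and by the stable manifold theorem this splitting integrates to a continuous stable foliation $\mathcal{F}^s$ defined on a neighborhood $N$ of $R(\phi)$ whose leaves are one-dimensional and positively invariant under $\phi_t$. I would then define an equivalence relation on $N$ by declaring $x \sim y$ whenever $y$ lies on the local stable leaf through $x$ (closed up under the flow). The key analytic fact is that, by the definition of the stable leaf, $\phi_t$ contracts distances along leaves exponentially, so the quotient map $\pi : N \to \tau := N / \sim$ intertwines the flow on $N$ with a well-defined semiflow on $\tau$.

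Next I would verify that $\tau$ has the structure of a template. Away from points where two distinct local unstable manifolds of the chain-recurrent set merge under the forward flow, $\pi$ locally collapses a small rectangle along its stable direction, yielding a $C^1$ surface chart carrying a non-singular semiflow. At the merging points, two such surfaces meet along a branch arc; a short local argument, using that unstable manifolds are $\phi_t$-invariant and that the flow is expanding transverse to $E^s$, shows that the branching is precisely modeled by the joining chart of Figure \ref{fig0.1}, while points where an unstable leaf bifurcates produce the splitting chart. Compactness of $R(\phi)$ yields finitely many such charts, and the gluing maps between them are linear on the branch edges because they are induced by the holonomy of the stable foliation. This identifies $\tau$ as a template in the sense of the definition above, embedded in $M$ as a subset of $N$ after choosing a transversal representative for each leaf.

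Then I would address the bijection of periodic orbits. Surjectivity is immediate: every periodic orbit of $\phi$ in $R(\phi)$ projects to a periodic orbit of the semiflow on $\tau$. For injectivity, suppose two distinct periodic orbits $\gamma_1 \ne \gamma_2$ projected to the same orbit on $\tau$; then for each point of $\gamma_1$ there is a point of $\gamma_2$ on the same stable leaf, so their forward trajectories remain uniformly close, contradicting expansiveness of the semiflow (equivalently, of the hyperbolic flow restricted to $R(\phi)$). Finally, for the ambient isotopy statement on a finite sublink $L_0 \subset L_\phi$, I would argue locally rather than globally: pick disjoint flow-boxes covering $L_0$ away from the branch locus of $\tau$, and interpolate between the identity and the stable-leaf collapse $\pi$ by a one-parameter family $H_s$ supported in a neighborhood of $L_0$ disjoint from the other orbits of $L_0$. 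Because only finitely many orbits are involved, $H_s$ can be chosen to stay an embedding at each time, giving the required ambient isotopy onto the corresponding sublink of $L_\tau$.

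The main obstacle I expect is the template-structure step: verifying rigorously that after collapsing along stable leaves the quotient is genuinely a branched two-manifold with only the two prescribed chart types, rather than something more singular. This requires careful use of the hyperbolic product structure on $R(\phi)$ together with a description of how unstable manifolds of distinct chain-recurrent pieces can meet, and it is the one place where the general hypothesis (hyperbolic chain-recurrent set, not just a basic set) genuinely enters. Once that local normal form is in hand, the bijection and isotopy statements follow cleanly from expansiveness and the locality of the collapse.
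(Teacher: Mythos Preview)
The paper does not give its own proof of this statement: Theorem~\ref{bw} is stated with attribution to Birman and Williams \cite{Birman1983} and then used as a black box, so there is nothing in the paper to compare your argument against. What you have written is a reasonable outline of the original Birman--Williams collapsing argument, which is indeed the standard route.

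Two points worth tightening if you intend this as an actual proof sketch rather than a citation. First, your ``surjectivity'' paragraph is mislabeled: showing that every periodic orbit of $\phi$ projects to a periodic orbit on $\tau$ is well-definedness of the map $L_\phi \to L_\tau$, not surjectivity. Surjectivity requires the converse, that every periodic orbit of the semiflow on $\tau$ lifts to a genuine periodic orbit of $\phi$, and this is the step that needs Markov partitions or a shadowing argument on the hyperbolic set --- it is not immediate. Second, the equivalence relation in the original construction is $x\sim y$ iff $d(\phi_t x,\phi_t y)\to 0$ as $t\to\infty$, which is slightly coarser than ``same local stable leaf''; this matters for getting the branch-line structure right when distinct local unstable pieces are identified in forward time.
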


\begin{remark}
    In particular any Anosov flow has a template carrying its periodic orbits.
\end{remark}

\begin{definition}
    A subtemplate $S$ of a template $\tau$ is a topological subset of $\tau$ which, equipped with the restriction of the semiflow of $\tau$ to $S$, satisfies the definition of a template.
\end{definition}

\begin{definition}
    Two templates are called equivalent if they carry the same periodic orbits, i.e. if there exist a bijection between the link of the periodic orbits on one of the templates into the link of periodic orbits on the other template.
\end{definition}

\begin{definition}
    Given a template, in the projection of the template, if all the crossings of the flow lines are positive, then the template is called positive. Thus, a positive template is a template that only contains positive knots.
\end{definition}

\begin{definition} \cite{rolfsen2003knots}
    A mapping $f:E \rightarrow B$ is said to be a fibration with fiber $F$ if each point of $B$ has a neighbourhood $U$ and a 'trivializing' homeomorphism $h: f^{-1}(U) \rightarrow U \times F$ for which the following diagram commutes;
    \begin{center}
      \begin{tikzcd}
    f^{-1}(U) \arrow{r}{h} \arrow[swap]{dr}{f} & U \times F \arrow{d}{projection} \\
     & U
      \end{tikzcd}  
    \end{center}
    Each set $f^{-1}(b)$ is called a fiber and it is homeomorphic with $F$.
\end{definition}

\begin{definition} \cite{rolfsen2003knots}
    A knot or link $L$ in $S^3$ is fibered if there exists a fibration map $f: S^3 \setminus L \rightarrow S^1$ so that in a neighborhood of $L$ the fibration is $A \times S^1$ where $A$ is an annulus.
\end{definition}

\begin{definition}
    A link is decomposable if there exists a sphere which does not intersect with the link and has at least one component of the link inside and one component outside, otherwise it is indecomposable.
\end{definition}

\begin{definition}
    A link of multiplicity $\mu$ is a link with $\mu$ components, i.e. a link which is given by $L: \sqcup_{\mu} S^1 \hookrightarrow S^3$.
\end{definition}

\begin{definition}
    Given a link $L$, the genus, $g(L)$, is defined as the minimal genus over all orientable surfaces $S$ which span $L$: that is $\partial S=L$, where $\partial S$ is the orientable boundary. A spanning surface of minimal genus is called a Seifert surface.
\end{definition}

\begin{theorem}(Birman and Williams \cite{Birman1983})
    Let $L$ be an indecomposable link of multiplicity $\mu$. Suppose that $L$ has a regular projection which exhibits it as a closure of positive $n$-string braid, and that in this projection there are $c$ crossings. Then $L$ is fibered, and its genus $g$ is given by the formula:
    $$2g=c-n+2-\mu$$
    $$r=2g +\mu -1=c-n+1$$
    \label{thmfk}
\end{theorem}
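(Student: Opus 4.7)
The plan is to construct a concrete Seifert surface from the positive braid projection via Seifert's algorithm, read off its Euler characteristic combinatorially, and then invoke a fiberedness result of Stallings for positive braid closures to identify this surface with the fiber of the fibration and hence with a minimal-genus Seifert surface.

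First I would apply Seifert's algorithm to the given positive braid projection. Because all strands of an $n$-braid are coherently oriented, resolving every crossing produces exactly $n$ Seifert disks — one per strand — and the $c$ positive crossings become $c$ twisted bands joining these disks. Denote the resulting Seifert surface by $S$. Its cell decomposition gives $\chi(S)=n-c$ directly. The indecomposability hypothesis on $L$ guarantees that $S$ is connected: if the braid diagram split as two disjoint sub-braids, a separating sphere would witness decomposability of $L$, contradicting the assumption. Using the standard relation $\chi(S)=2-2g(S)-\mu$ for a connected orientable surface with $\mu$ boundary components, solving gives $2g(S)=c-n+2-\mu$.

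Second, I would invoke Stallings' theorem that the closure of a positive braid is a fibered link whose fiber surface is precisely the Seifert surface $S$ produced by Seifert's algorithm above. This immediately yields the fiberedness part of the statement. Moreover, since the fiber of a fibered link in $S^3$ is always a minimal-genus Seifert surface (the fiber realizes the Thurston norm on $H_2(S^3\setminus L,\partial)$), we have $g(L)=g(S)$, so the genus formula $2g=c-n+2-\mu$ holds as claimed. The rank $r$ of the free group carried by the surface is then $r=1-\chi(S)=c-n+1$, which matches $2g+\mu-1$ by the same Euler characteristic identity.

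The main obstacle is the fiberedness assertion together with the identification of the algorithmic Seifert surface as the fiber: this is not something one can read off from the diagram combinatorially, and requires Stallings' result (or, equivalently, Bennequin's inequality, which gives the matching lower bound $2g(L)\geq c-n+2-\mu$ for positive braid closures). The remaining ingredients — the count $\chi(S)=n-c$, connectedness from indecomposability, and the passage from Euler characteristic to genus — are routine. I would therefore structure the write-up so that the combinatorial computation is done first and cleanly, and then cite Stallings to upgrade $S$ from a spanning surface to a minimal-genus fiber, at which point both displayed equalities follow simultaneously.
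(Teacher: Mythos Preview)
Your argument is correct and is essentially the standard proof of this result: apply Seifert's algorithm to the positive braid projection to get a connected surface $S$ with $\chi(S)=n-c$, then cite Stallings' theorem to conclude that the closure of a positive braid is fibered with fiber exactly $S$, so $S$ realizes the genus and the two displayed formulas follow.

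There is nothing to compare against, however: in the paper this theorem appears in the preliminaries section as a quoted result of Birman and Williams \cite{Birman1983} and is stated without proof. The paper only \emph{uses} it later (to deduce that the knots on the figure-eight template are fibered). So your write-up is not an alternative to the paper's proof but rather a supplied proof where the paper gives none; what you have outlined is in fact the argument Birman and Williams give, relying on Stallings' fibering criterion for homogeneous (in particular positive) braid closures.
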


\begin{definition}
    Given two oriented knots $K_1, K_2$, their connected sum, $K_1 \sharp K_2$,  is formed by placing each in disjoint embedded 3-balls, $B_1, B_2$, such that some arc of $K_1 (K_2)$ lies on the boundary of $B_1$ ($B_2$ respectively). Then delete the interior of each arc and identify the boundaries of the arcs via an orientation preserving homeomorphism.  
\end{definition}

\begin{remark}
    In the definition of connected sum, the operation is well-defined since the choice of arcs does not effect the connected sum. This operation is commutative and associative, but it is not a group operation since it does not have an inverse. 
\end{remark}

\begin{definition}
    If a knot can be decomposed into the connected sum of two or more non-trivial knots, it is said to be composite, otherwise it is prime.
\end{definition}

\begin{definition}
    A knot $K$ is called visually prime if it is given by a diagram that is a prime diagram, i.e. for any simple closed curve $\eta$ on the projection plane, intersecting the knot projection at two points, one of the components of $K$ minus the intersection points is a trivial arc in the disc bounded by $\eta$.
\end{definition}

\begin{theorem} (Ozawa \cite{ozawa2002closed})
    Non-trivial positive knots or links are prime if their positive diagrams are connected and prime.
    \label{thmpk}
\end{theorem}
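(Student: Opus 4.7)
The plan is to argue by contradiction using Seifert surface theory applied to the diagram. Suppose $K$ is a non-trivial positive link with a connected prime positive diagram $D$, and assume for contradiction that $K = K_1 \# K_2$ where both summands are non-trivial.

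First I would apply Seifert's algorithm to $D$ to construct a Seifert surface $F$ for $K$ from disks bounded by Seifert circles, joined by positively twisted bands at each crossing. A classical theorem of Cromwell (complementing Theorem \ref{thmfk}) guarantees that for positive diagrams this $F$ realises the minimal Seifert genus of $K$. Since $K$ is non-trivial and $D$ is connected, $F$ is a connected non-disk surface, so $g(F) \geq 1$.

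A non-trivial connected sum decomposition $K = K_1 \# K_2$ provides a decomposing 2-sphere $S \subset S^3$ meeting $K$ transversely in two points. Standard innermost-disk and outermost-arc arguments, combined with the incompressibility of the minimal genus surface $F$, let us isotope $S$ so that $S \cap F$ reduces to a single essential arc $\alpha$ properly embedded in $F$. This expresses $F$ as a boundary connected sum $F = F_1 \natural F_2$ with $\partial F_i = K_i$.

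The main step, and where I expect the principal obstacle, is translating this three-dimensional decomposition into a combinatorial decomposition of $D$. The plan is to show that $\alpha$ can be isotoped away from the twisted bands and into the Seifert disks of $F$: any arc crossing a band could be pushed off using the positive twist and the minimal genus of $F$, and arcs in a single disk can be eliminated by further innermost arguments. Then projecting $\alpha$ together with a complementary arc of $S$ lying above the projection plane yields a simple closed curve $\eta$ in the projection plane which meets $D$ transversely in exactly two points and separates $D$ into non-trivial sub-diagrams corresponding to $K_1$ and $K_2$. This contradicts the primeness of $D$, completing the argument. A small separate check is needed to rule out the degenerate situation where one side of $\eta$ carries a trivial arc, which uses the non-triviality of both summands $K_i$.
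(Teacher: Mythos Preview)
The paper does not prove Theorem~\ref{thmpk}; it is quoted from Ozawa \cite{ozawa2002closed} and used as a black box in the argument that knots on the figure-eight template are prime. There is therefore no proof in the paper to compare your proposal against.

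Evaluated on its own, your outline follows a natural strategy (Cromwell's minimal-genus result for the algorithmic Seifert surface $F$, then analysis of a decomposing sphere $S$), but the step you yourself flag as the ``principal obstacle'' is a genuine gap rather than a routine detail. Once $S\cap F$ is reduced to a single essential separating arc $\alpha\subset F$, there is no general mechanism forcing $\alpha$ off the twisted bands: an essential arc in a surface built from disks and bands can be forced to run over bands (already for a once-punctured torus). The phrase ``pushed off using the positive twist and the minimal genus of $F$'' does not name an actual isotopy, and minimal genus alone does not control where essential arcs sit relative to a given handle decomposition. Likewise, the subsequent claim that $\alpha$ together with an arc of $S$ projects to a simple closed curve in the diagram plane meeting $D$ in exactly two points presupposes a very special position of $S$ that you have not arranged. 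The published proofs (Cromwell for homogeneous diagrams, Ozawa for positive ones) get past this by establishing stronger structural information about $F$---for instance that $S^3$ cut along $F$ is a handlebody, so $F$ is a \emph{free} Seifert surface---and then exploiting that structure to control how essential spheres and tori can sit relative to $F$. Without an argument of that kind, the passage from the $3$-dimensional decomposition of $F$ back to a diagrammatic decomposition of $D$ does not go through.
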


\section{the first return map}
By numerical studies \cite{creaser2015alpha}, at the second T-point, there exists an invariant figure-eight heteroclinic knot (see Figure \ref{fig1.01}). In this study, we assume that there exists a T-point with a figure-eight invariant heteroclinic knot. We are interested in studying the flow in this case where we aim to classify the periodic orbits which appear in the flow. To do so, we first need to understand how the first return map looks like for a given cross section.\\

\begin{figure}[!h]
    \centering
    \includegraphics[width=7cm]{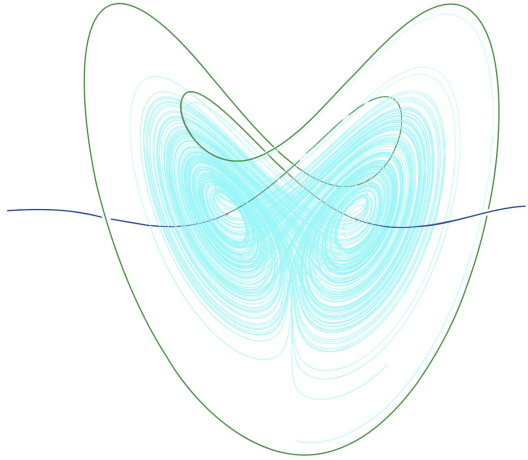}
    \caption{Figure eight knot obtained by numerical approach}
    \label{fig1.01}
\end{figure}

By \cite{Pinsky2023}, there exists a two dimensional cross section $R$ with boundary, which includes the singular points $p^\pm$ on its boundary, and its interior is transverse to the flow except for orbits which limit into the singular points. In this section we will describe the first return map of $R$.

\begin{figure}[!h]
    \centering
    \includegraphics[width=7cm]{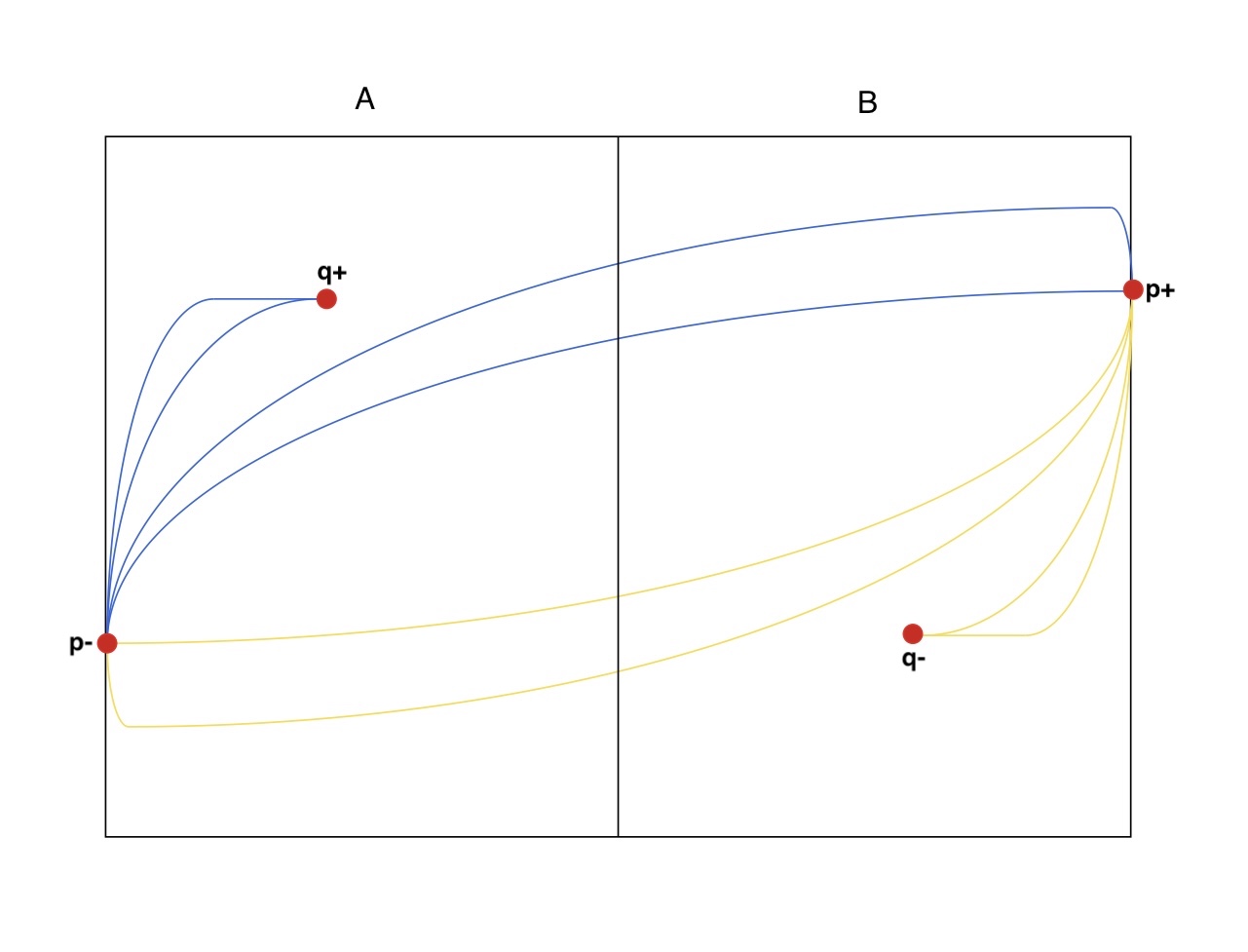}
    \caption{The first return map}
    \label{fig1}
\end{figure}

    The stable manifold of the origin separates the cross section $R$ into two parts which we denote by $A$ and $B$. We will prove bellow some properties of the first return map for the given cross section $R$, and we will show that the first return map of $R$ must have two symmetric "hooks", for an example see Figure \ref{fig1}; the image of part $A$ under the first return map contains the yellow "hook", and the image of part $b$ contains the blue one.\\
    We will prove below that in order to have an invariant figure-eight knot, the heteroclinic orbits which flow from the origin to the singular points $p^\pm$ must pass through the rectangle twice, therefore, there is a point $q^+$ that is mapped to the singular point $p^+$ and a point $q^-$ that is mapped to the singular point $p^-$, see Figure \ref{fig1}.\\

\begin{theorem}
    Assume that the second T-point exists, and assume that at this T-point there exists an invariant figure-eight heteroclinic knot. Then the symmetric heteroclinic orbits which flow from the origin to the singular points $p^\pm$, must pass through the cross section $R$ at least once before ending at the points $p^\pm$.
    \label{thm1}
\end{theorem}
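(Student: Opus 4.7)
My strategy is to argue by contradiction: assume $\gamma^+$, the heteroclinic orbit from $0$ to $p^+$, does not pass through the interior of $R$, and derive that the invariant closed heteroclinic curve is isotopic in $S^3$ to the unknot. Since the figure-eight knot is non-trivial, this contradicts the hypothesis. The $\mathbb{Z}_2$ symmetry of the Lorenz flow then yields the analogous conclusion for $\gamma^-$, so there is no loss in treating the two rays of $W^u(0)$ simultaneously.

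First I would pin down the configuration under the no-crossing hypothesis. Since $W^u(0)$ is one-dimensional, $\gamma^+$ is one of its two rays, and the cross section $R$ has $p^\pm$ on its boundary with $W^s(0)\cap R$ separating it into the pieces $A$ and $B$. If $\gamma^+$ does not enter the interior of $R$, then the arc from $0$ to $p^+$ is confined to a region bounded by $\partial R$ near $p^+$, a sheet of $W^s(p^+)$, and $W^s(0)$; in particular $\gamma^+$ is an embedded arc that does not wind around or braid with any other piece of the invariant curve. I would then assemble the closed curve from $\gamma^+$, $\gamma^-$, and their two extensions through $\infty$, and use the no-crossing hypothesis to isotope each arc rel endpoints so that the whole curve lies in an arbitrarily small tubular neighborhood of the planar graph $\gamma^+\cup\gamma^-$ emanating from $0$. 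Because this graph is planar and the tube is embedded, the resulting curve bounds a disk in $S^3$, so it is the unknot, contradicting the figure-eight hypothesis.

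The main obstacle is controlling the extensions through $\infty$ well enough to carry out these isotopies. I would work in the one-point compactification $S^3=\mathbb{R}^3\cup\{\infty\}$, using a Poincar\'e-style blow-up at infinity to replace the improper pieces by honest arcs joining $p^\pm$ back to the origin, and then invoke the $\mathbb{Z}_2$ symmetry to argue that if neither $\gamma^+$ nor $\gamma^-$ braids with the rest of the curve, then neither does its reflection through infinity. A secondary subtlety is the spiralling of $\gamma^+$ into the saddle-focus $p^+$ along its weak-stable direction: a local normal form near $p^+$ is needed to confirm that this spiral contributes no essential crossings to the knot diagram. Once both the local analysis at $p^\pm$ and the global control at $\infty$ are in place, the spanning disks for the two lobes can be constructed explicitly and glued along $\gamma^+\cup\gamma^-$, completing the proof by contradiction.
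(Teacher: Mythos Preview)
Your contradiction strategy is the same as the paper's, but the mechanism you use to reach the contradiction differs in a way that leaves a real gap.

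The paper does \emph{not} argue that ``$\gamma^\pm$ miss $R$'' implies the heteroclinic curve is unknotted. Quite the opposite: it explicitly allows the possibility that $\alpha^+$ and $\alpha^-$ rotate around one another before landing at $p^\pm$ (this is exactly the configuration drawn in the paper's Figure labelled \ref{fig1.4}), which would produce the requisite crossings for a figure-eight even without hitting $R$. The paper then rules that configuration out by a dynamical argument: using the monotonicity of the $z$-coordinate relative to the paraboloid $\{xy=\beta z\}$ to constrain the shape of $\alpha^\pm$, and then observing that if the two separatrices braided in this way, continuity of the flow would force the images of the two halves $A$ and $B$ of $R$ under the first return map to overlap, contradicting uniqueness of ODE solutions.

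Your plan skips this step. You assert that if $\gamma^+$ misses the interior of $R$ then it is ``confined to a region bounded by $\partial R$ near $p^+$, a sheet of $W^s(p^+)$, and $W^s(0)$'' and therefore ``does not wind around or braid with any other piece of the invariant curve.'' But that confinement claim is not obvious and, in light of the braided scenario the paper considers, not true without further input from the dynamics. The spiralling into the saddle-focus that you flag as a ``secondary subtlety'' is in fact the heart of the matter: nothing topological prevents the two unstable separatrices from linking as they approach $p^\pm$, and your isotopy-to-a-planar-graph argument would have to undo that linking, which it cannot. To close the gap you would need something like the paper's use of the $z$-monotonicity together with the first-return-map argument, or some other genuinely dynamical ingredient, rather than a purely topological isotopy.
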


\begin{proof}
    First let us notice that by the symmetry of the system $(x,y,z) \rightarrow (-x,-y,z)$, whenever we have a a heteroclinic orbit which flows from the origin to the singular point $p^+$, there must be a symmetric heteroclinic orbit which flows from the origin to the symmetric singular point $p^-$, we will denote this heteroclinic orbits by $\alpha ^\pm$.
    As we have mentioned before, these connections can be extended into $\infty$. By this extension we get an invariant heteroclinic knot which connects all the singular points of the system, $p^\pm$, the origin, and $\infty$, this invariant heteroclinic knot will be denoted by $\gamma$.
    Moreover, if we take the projection of $\gamma$ on the plane $x=y$, the part which connects the points $p^\pm$ through $\infty$ does not have any crossings (with itself).
    Therefore, we can identify the type of the invariant heteroclinic knot, $\gamma$, by analysing only the heteroclinic orbits $\alpha ^\pm$.\\
    In addition, let us notice that the $z$-variable increases or decreases monotonely depending on the range of the points with respect to the hyperbolic paraboloid $\mathcal{P}=\{yx=\beta z\}$. The $z$-variable monotonely increases outside $\mathcal{P}$, $\{xy>\beta z\}$, and monotonely decreases inside $\mathcal{P}$, i.e. $\{xy<\beta z\}$.\\
    Assume for a contradiction that the orbits $\alpha ^\pm$ does not pass through the cross section $R$ before ending at the points $p^\pm$, in other words, the orbits $\alpha ^\pm$ intersects with the cross section only at the points $p^\pm$. 
    Then, the $z$-variable of the orbits $\alpha^\pm$ monotonely increases until the orbits cross through $\mathcal{P}$, then the $z$-variable decreases monotonely until the orbits end at the singular points $p^\pm$.
    Therefore, the only way to get the figure-eight knot whenever we project on the plane $x=y$, the orbits $\alpha ^\pm$ must rotate around each other before ending at the points $p^\pm$ (see Figure \ref{fig1.4}).
    In this case, since the flow is continuous, there exist flow lines which connects part A and B with the heteroclinic orbits $\alpha ^\pm$ respectively. This implies that the images of parts A and B of $R$ under the first return map intersect (see Figure \ref{fig1.2}), which is a contradiction for the uniqueness of the solution for ordinary differential equations.
\end{proof}

\begin{figure}[!h]
    \centering
    \includegraphics[width=7cm]{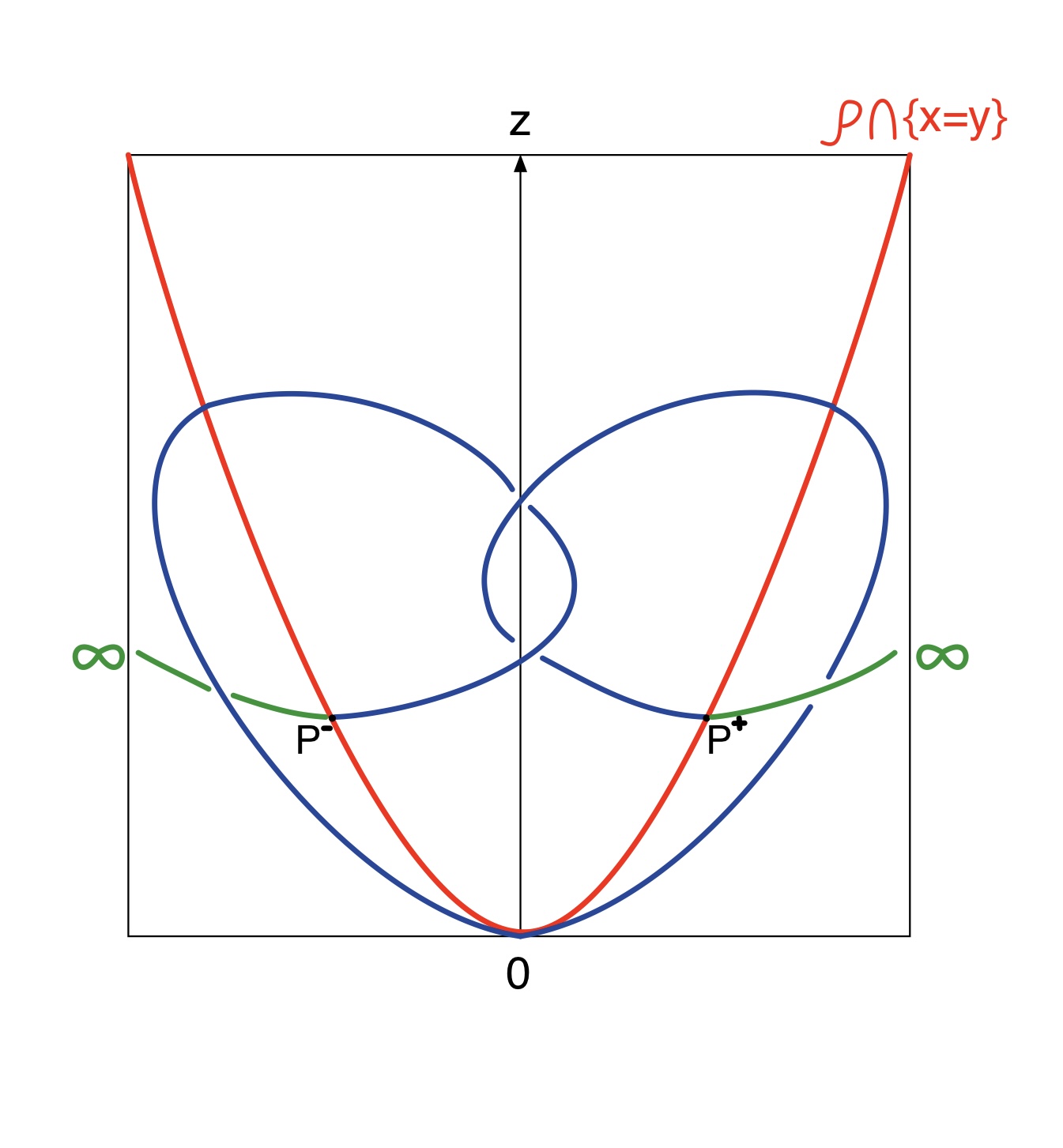}
    \caption{The projection of the heteroclinic orbits $\alpha ^ \pm$ on the plane ${x=y}$.}
    \label{fig1.4}
\end{figure}

\begin{figure}[!h]
    \centering
    \includegraphics[width=7cm]{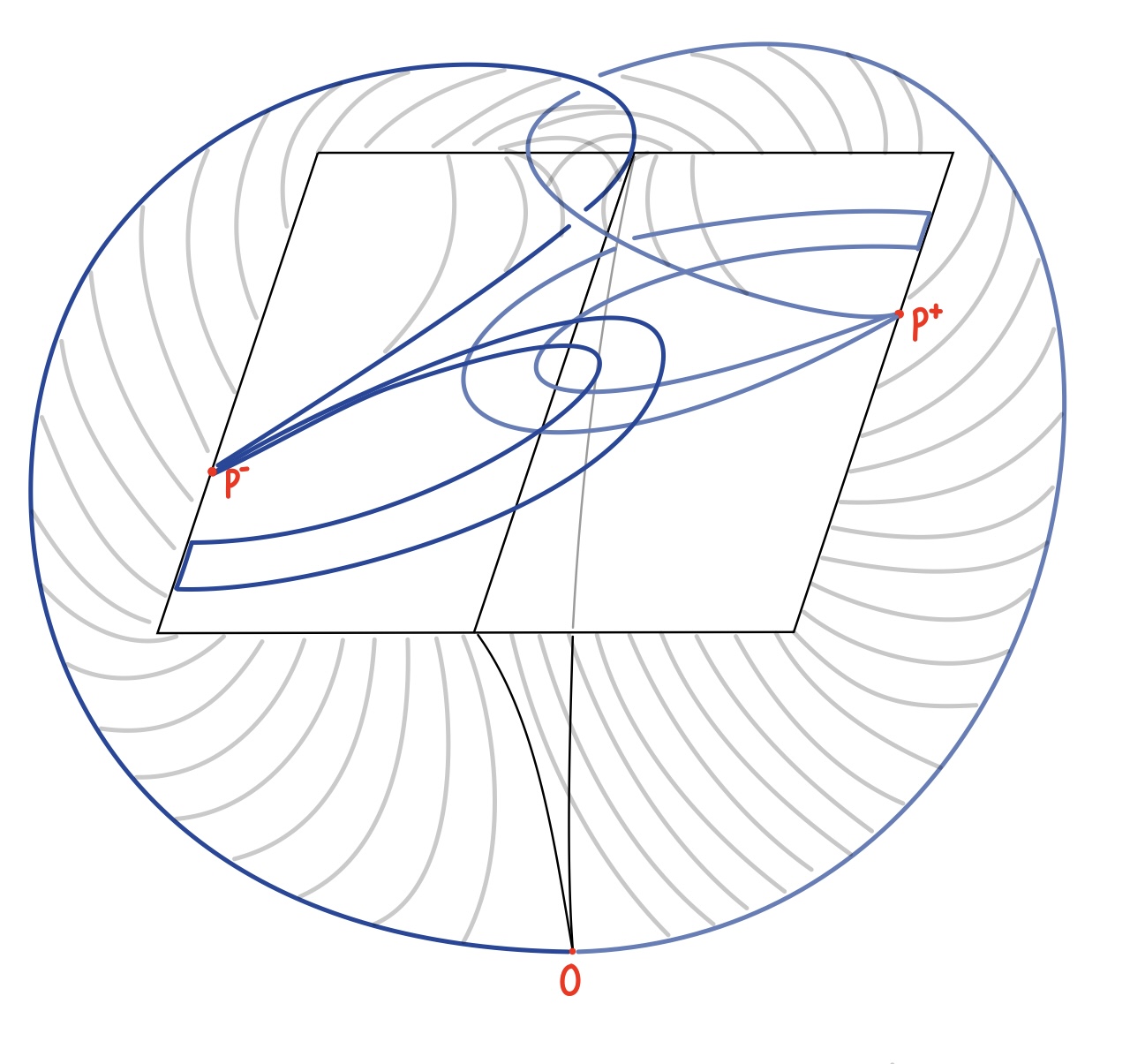}
    \caption{If the seperatrix does not pass $R$ then the flow lines intersect - a contradiction.}
    \label{fig1.2}
\end{figure}

\begin{corollary}
    There exist points $q^\pm$ in the rectangle $R$ which are mapped to the singular points $p^\pm$.
\end{corollary}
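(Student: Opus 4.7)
The plan is to derive the corollary as an almost immediate consequence of Theorem \ref{thm1}, by tracking the last intersection of the heteroclinic separatrix with $R$ before it limits on $p^\pm$. By Theorem \ref{thm1}, each of the orbits $\alpha^\pm$ meets the interior of the cross section $R$ at least once before ending at $p^\pm$. Since the set of times at which $\alpha^+$ hits $R$ is closed and bounded above (as $\alpha^+$ accumulates on $p^+$, which lies on $\partial R$ but is not a point of transverse crossing for interior orbits), I would take $q^+$ to be the image of the supremum of this set of times; symmetrically for $q^-$.

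Next, I would verify that $q^\pm$ is genuinely in $R$ in the sense required by the first return map. Because the interior of $R$ is transverse to the flow, any intersection of $\alpha^+$ with the interior of $R$ is a transverse, isolated intersection; hence the supremum above is attained at an honest crossing point $q^+ \in R$. After $q^+$, the forward orbit of $q^+$ under the flow has no further intersection with $R$ until it reaches $p^+$ on $\partial R$ in infinite time. Thus, under the first return map (extended in the obvious way to include limits onto singular points on $\partial R$, as set up in the discussion preceding Theorem \ref{thm1}), $q^+$ is mapped precisely to $p^+$, and by the $(x,y,z)\mapsto(-x,-y,z)$ symmetry the corresponding statement holds for $q^- \mapsto p^-$.

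The only mild subtlety I would want to address is that the first return map is defined on the subset $U \subset R$ of points whose forward orbits return to $R$ in finite time (Definition \ref{def1}), whereas $q^\pm$ takes infinite time to reach $p^\pm$. This is already built into the geometric setup of $R$, whose boundary contains $p^\pm$ exactly to absorb such asymptotic orbits; I would make this convention explicit in one sentence, noting that the return map extends continuously to $q^\pm$ by sending it to the limit point $p^\pm \in \partial R$. With that convention fixed, nothing further is needed, and the corollary follows from Theorem \ref{thm1} together with transversality and symmetry.
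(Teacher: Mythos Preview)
Your proposal is correct and follows the same route as the paper: deduce the existence of $q^\pm$ directly from Theorem~\ref{thm1} by noting that $\alpha^\pm$ meets $R$ before limiting on $p^\pm$, and invoke the $(x,y,z)\mapsto(-x,-y,z)$ symmetry. The paper's proof is a single sentence and does not spell out the last-intersection/supremum argument or the infinite-time convention for the return map; your added care on these points is sound but more than the paper provides.
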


\begin{proof}
    By Theorem \ref{thm1}, the symmetric heteroclinic orbits which flow from the origin to the singular points $p^\pm$ pass through the cross section at least once before ending at the singular point $p^\pm$, therefore, there exist two points, $q^\pm$, in the cross section $R$, which flow to the singular points $p^\pm$.
\end{proof}

\begin{theorem}
    Under the assumptions of Theorem \ref{thm1}, and assume that the heteroclinic orbits $\alpha ^\pm$ pass through the cross section exactly once before ending at the points $p^\pm$. Then, the point $q^+ \in A$, and the point $q^- \in B$.
    \label{thm2}
\end{theorem}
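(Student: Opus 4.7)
The plan is to combine the $\mathbb{Z}/2$ symmetry of the Lorenz equations with the uniqueness of ODE solutions and a local analysis of the flow near $O$. By the symmetry $(x,y,z)\mapsto(-x,-y,z)$, which fixes $O$ and interchanges $p^+$ with $p^-$, the heteroclinic orbits $\alpha^+$ and $\alpha^-$ are swapped, and so are their intersections $q^+$ and $q^-$ with $R$. The symmetry preserves $W^s(O)$ as a set (since $O$ is fixed) and acts non-trivially on directions transverse to the separator $W^s(O)\cap R$ in $R$, so it swaps the two components $A$ and $B$. Consequently $q^+\in A$ is equivalent to $q^-\in B$, and it suffices to prove the first statement.

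Next, I would show that $q^+$ is not on the separator $W^s(O)\cap R$. This is immediate from uniqueness of ODE solutions: the orbit $\alpha^+$ tends to $p^+$ and not to $O$, so it cannot coincide with any orbit inside $W^s(O)$; since orbits cannot cross each other in a non-singular region, $\alpha^+\setminus\{O\}$ stays in the complement of $W^s(O)$. In particular $q^+=\alpha^+\cap R$ lies strictly in one of the two components $A$ or $B$.

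To identify which, I would analyze the behaviour of the first return map near the separator. A point $p\in R$ close to $W^s(O)\cap R$ has an orbit that shadows $W^s(O)$ for a long time, passes near $O$, and then departs along one of the two branches of $W^u(O)$; which branch is determined by which side of $W^s(O)$ the point $p$ lies on. Therefore the image $r(p)$ as $p$ approaches the separator from inside $A$ accumulates at the first hit of one specific branch of $W^u(O)$ with $R$, namely either $q^+$ or $q^-$. By the labelling convention of Figure \ref{fig1}, $r(A)$ is the yellow hook that accumulates at $q^+$, so the relevant branch is $\alpha^+$. Hence $\alpha^+$ lies on the same side of $W^s(O)$ as $A$, and so does its intersection with $R$; that is, $q^+\in A$. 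Applying the symmetry argument from the first paragraph then gives $q^-\in B$.

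The main obstacle is the third step: pinning down the match between the labels $A,B$ and the branches $\alpha^\pm$. Without the figure this reduces the theorem to a consistency check on the labelling, so care must be taken to tie the convention to dynamical data rather than to an arbitrary picture; once the convention of Figure \ref{fig1} is in force, the content amounts to a continuity argument about the first return map, tracking which branch of $W^u(O)$ exits on each side of the separator. A secondary subtlety is verifying that the symmetry $(x,y,z)\mapsto(-x,-y,z)$ genuinely swaps, rather than preserves, the two components $A$ and $B$; this should follow from the fact that $p^+$ and $p^-$ lie on opposite sides of the separator along $\partial R$, but it is worth checking explicitly using the local structure of $W^s(O)$ at $O$.
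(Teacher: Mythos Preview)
Your argument never invokes the hypothesis that the heteroclinic knot is the figure-eight, and that hypothesis is what actually does the work in the paper's proof. The paper argues by contradiction: if $q^+\in B$ (hence $q^-\in A$ by symmetry), then, using the monotonicity of the $z$-coordinate together with the non-crossing property of the arcs from $O$ to $q^\pm$ established in Theorem~\ref{thm1}, the closed heteroclinic curve $\gamma$ splits as a connected sum of two knots. By the $(x,y,z)\mapsto(-x,-y,z)$ symmetry these two summands have the same type, so $\gamma$ is either the unknot or a non-prime knot; in either case $\gamma$ is not the figure-eight. Thus the conclusion $q^+\in A$ is a genuinely topological statement about which configuration produces a prime non-trivial knot, not a consequence of local dynamics near $O$.

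The specific gap in your chain is the inference ``$\alpha^+$ lies on the same side of $W^s(O)$ as $A$, hence its intersection $q^+$ with $R$ lies in $A$.'' This would be valid only if $W^s(O)$ were a globally separating surface in $\mathbb{R}^3$ whose trace on $R$ is exactly the separator. But $W^s(O)$ is only an injectively immersed $2$-manifold; its complement need not have two components, and an orbit can leave $R$ from $A$, avoid $W^s(O)$ entirely, and still return to $R$ in $B$. So knowing which branch of $W^u(O)$ the $A$-side shadows tells you where $r(A)$ \emph{accumulates}, but not in which half of $R$ that accumulation point sits. In fact, comparing with the description of the hooks in the subsequent theorem, the boundary of $r(A)$ accumulates on $q^-$, not $q^+$; carrying your reasoning through with the correct branch would then yield $q^-\in A$, the opposite of what is to be proved. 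Your ``main obstacle'' paragraph correctly senses trouble, but the difficulty is not a labelling convention to be checked against a picture: it is the substance of the theorem, and it requires the knot-type hypothesis.
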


%\begin{proof}
 %   Assume for a contradiction that $q^+ \in B$ which implies by the symmetry that $q^- \in A$. Then, as we have mentioned before, the $z$-variable of $\alpha^\pm$ monotonically increases until the orbits $\alpha^\pm$ cross the paraboloid $\mathcal{P}$, and then the $z$-variable of $\alpha^\pm$ monotonically decreases. Therefore, by the monotonicity of $z$ and the symmetry of the system, we have only two options. 
  %  At the first option, we get the trefoil knot, and in the second option, we get the unknot. TO ADD FIGURES. 
%\end{proof}

\begin{proof}
    Assume for a contradiction that $q^+ \in B$ which implies by the symmetry that $q^- \in A$. By the monotonicity of the $z$-variable, and the fact that the orbits which flow from the origin to the points $q^\pm$ can not rotate around each other, i.e. these orbits will not have any crossings with each other when projecting on the plane $x=y$, as we proved in Theorem \ref{thm1}, we get that the heteroclinic knot $\gamma$ is a connected sum of two knots. These knots are the obits $\alpha ^\pm$ with the extension to $\infty$, where we close each of them by an orbit that connects the origin with the $\infty$.\\
    By the symmetry of the system, if one of the knots is the unknot, then also the other one is the unknot, which implies that the heteroclinic knot is the unknot. Otherwise, the heteroclinic knot can be decomposed into a connected sum of two non-trivial knots, therefore, in this case the heteroclinic knot $\gamma$ is not prime, hence not the figure-eight knot -  a contradiction.
\end{proof}

\begin{theorem}
    Under the assumptions of Theorem \ref{thm2}, if we divide the cross section $R$ into three parts as in Figure \ref{fig4.1}, then the point $q^+$ is in part $1$ and the point $q^-$ is in part $3$.
    \label{thm3}
\end{theorem}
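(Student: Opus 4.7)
The plan is to argue by contradiction, using the figure-eight topology of $\gamma$ as the obstruction, and invoking what has already been established: by Theorem \ref{thm2} we know $q^+ \in A$ and $q^- \in B$, and by Theorem \ref{thm1} the heteroclinic orbits $\alpha^{\pm}$ pass through $R$ exactly once before terminating at $p^{\pm}$. So I need only eliminate the possibility that $q^+$ lies in part $2$ of $A$ (any component of the three-part decomposition of $R$ other than part $1$ that still sits inside $A$); the statement for $q^-$ then follows by the symmetry $(x,y,z) \mapsto (-x,-y,z)$, which swaps $q^+ \leftrightarrow q^-$, $A \leftrightarrow B$, and parts $1 \leftrightarrow 3$.

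First I would recall the geometric meaning of the three parts of $R$ in Figure \ref{fig4.1}: parts $1$ and $3$ are the outer regions containing the hooks of the first-return map image, while part $2$ is the middle strip bounded by the preimages of the stable manifold of the origin under the first-return map. The key dynamical input is that if a point is in part $2$, its forward orbit re-enters the cross section again before reaching the neighborhood of $p^{\pm}$, whereas a point in part $1$ or $3$ flows directly to the vicinity of one of the $p^{\pm}$ without re-crossing $R$. This distinction is what lets us control the number of crossings in the projection onto $x=y$.

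Next, assume for contradiction that $q^+$ lies in part $2$. Then by continuity there is a neighborhood of $q^+$ in part $2$ whose points follow $\alpha^+$ closely and hence also lie in part $2$. Tracing the segment of $\alpha^+$ from $q^+$ to $p^+$ and projecting onto $x=y$, the part-$2$ location forces $\alpha^+$ to wind around the stable manifold of the origin an extra time before reaching $p^+$, so that in the projection the orbit $\alpha^+$ acquires additional self-crossings with its symmetric partner $\alpha^-$. I would then argue, by counting these crossings together with the unique ``outside'' arc through $\infty$ (which by Theorem \ref{thm1} contributes no crossings), that the resulting knot $\gamma$ has either too many crossings to be the figure-eight, or decomposes as a connected sum just as in the proof of Theorem \ref{thm2}, contradicting primeness and thus contradicting $\gamma$ being the figure-eight knot.

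The main obstacle will be the second step: making the crossing-count argument fully rigorous without having to draw every possible projection case by hand. I would handle this by reducing to a braid word on two strands for the $\alpha^{\pm}$ bundle, using the fact that the first-return map acts linearly on the separating arcs (from the template chart structure in Figure \ref{fig0.1}), so that placing $q^+$ in part $2$ adds a definite extra half-twist to the braid. Comparing the resulting closure to the standard two-bridge representation of the figure-eight knot then gives the contradiction cleanly, and the symmetric conclusion $q^- \in $ part $3$ is immediate.
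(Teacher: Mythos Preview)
Your proposal rests on a mistaken picture of the three-part decomposition of $R$, and as a result you are ruling out the wrong alternative.  You assume that parts $1$ and $3$ are the two outer regions and that part $2$ is a middle strip characterised dynamically by ``the forward orbit re-enters $R$ once more before approaching $p^{\pm}$''; you then set out to exclude $q^{+}\in$ part $2$.  But under the standing hypothesis of Theorem~\ref{thm2} the orbits $\alpha^{\pm}$ cross $R$ \emph{exactly once}, so a part defined by ``hits $R$ again'' could never contain $q^{+}$ in the first place, and your contradiction would be vacuous.  More importantly, in the decomposition of Figure~\ref{fig4.1} the relevant competing location for $q^{+}$ (already known to lie in $A$) is part $3$, not part $2$: the three cases the paper actually has to separate are (i) $q^{+},q^{-}$ both in part $3$, (ii) $q^{+}\in$ part $3$, $q^{-}\in$ part $1$, and (iii) $q^{+}\in$ part $1$, $q^{-}\in$ part $3$.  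Your argument never addresses (i) or (ii).

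The paper's proof is a direct case analysis rather than a crossing-count or braid-word argument: it observes that since $q^{+},p^{-}\in A$, the image of $A$ under the first return map contains a path joining $p^{+}$ and $p^{-}$ (and symmetrically for $B$), which cuts the possibilities down to the three cases above; it then identifies the heteroclinic knot in cases (i) and (ii) as the knot $9_{46}$, leaving (iii) as the only configuration compatible with $\gamma$ being the figure-eight.  Your proposed reduction to a two-strand braid word and comparison with a two-bridge presentation is an interesting idea in spirit, but as written it is aimed at the wrong case and is too vague to substitute for the explicit knot identification the paper carries out.
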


\begin{figure}[!h]
    \centering
    \includegraphics[width=7cm]{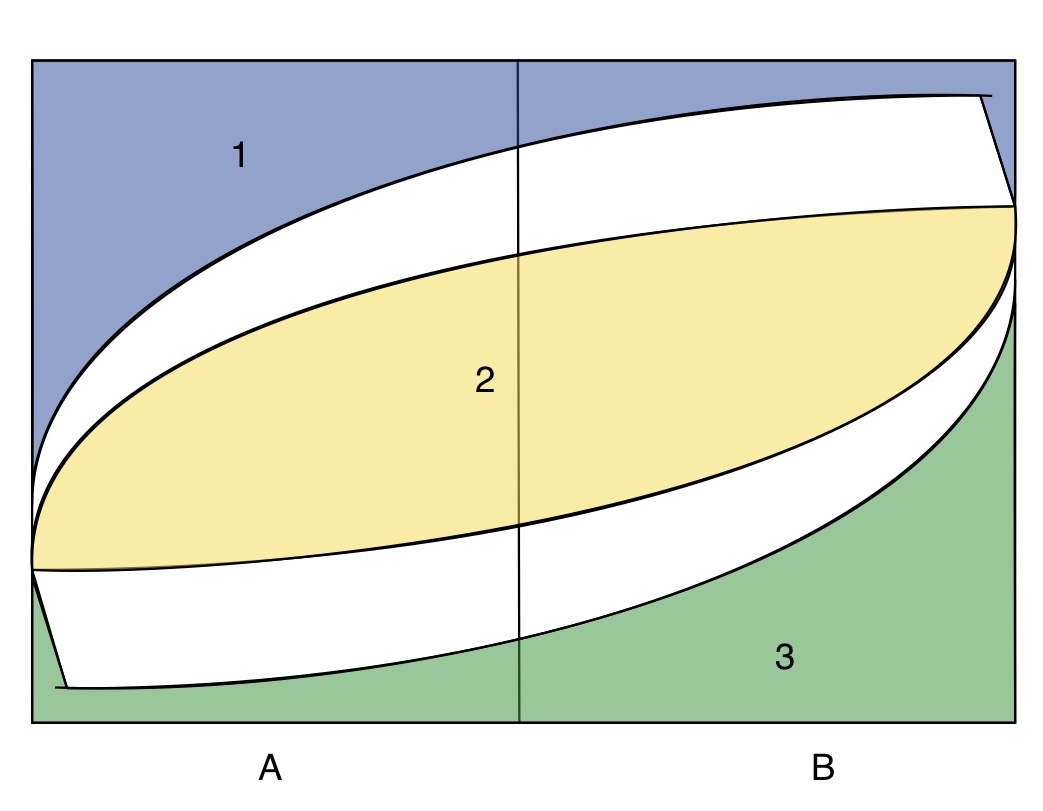}
    \caption{We divide the rectangle $R$ into three parts 1 - blue, 2 - yellow and 3 - green}
    \label{fig4.1}
\end{figure}

\begin{proof}
    In Theorem \ref{thm2}, we proved that $q^- \in B$ and $q^+ \in A$, therefore, if we look at part $A$ of $R$, the points $q^+,p^- \in A$, hence the points $p^\pm$ are in the image of $A$ under the first return map. Using the fact that the flow is continuous implies that the image of $A$ under the first return map, contains a path connecting the points $p^+, p^-$, and by the symmetry, we get the same for the image of part $B$ of the cross section $R$.\\
    There are three possible options which follows from the described above. 
    The first option is that both of the points $q^\pm$ lie on part $3$ of $R$, in this case, as we can see in Figure \ref{fig4.2}(a) the hetroclinic connection that we get is $9_{46}$ knot.\\
    The second option is that the point $q^+$ is in part $3$ and the point $q^-$ is in part $1$, in this case, the hetroclinic connection that we get is also $9_{46}$ knot as we can see in Figure \ref{fig4.2}(b).\\
    The last option is the only one which for the hetroclinic connection gives the figure-eight knot as we can see in Figure \ref{fig4.2}(c).
\end{proof}

\begin{figure}[!h]
    \centering
    \includegraphics[width=10cm]{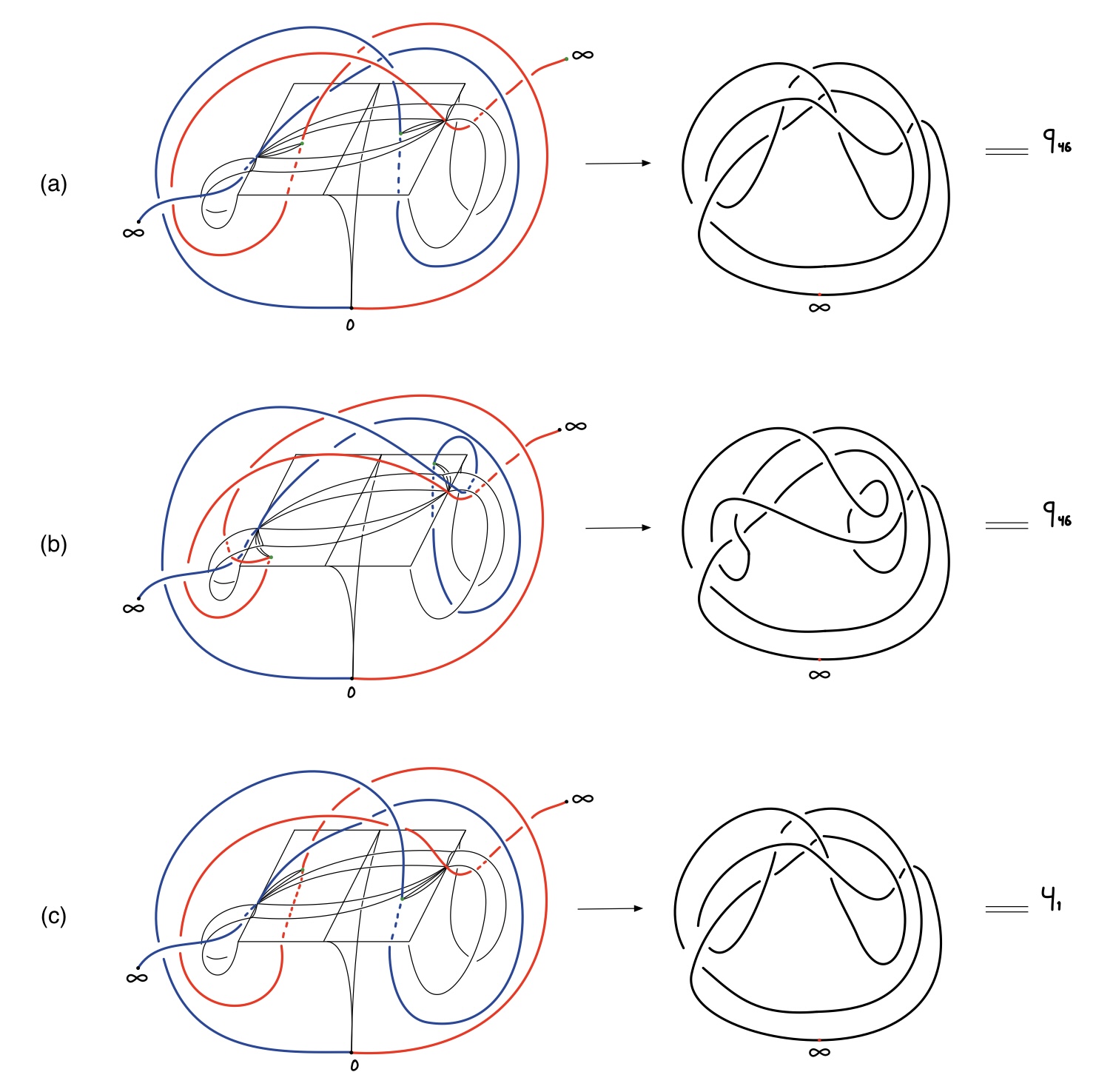}
    \caption{(a) At the first case we get the $9_{46}$ knot. (b) At the second case we get the $9_{46}$ knot. (c) At the last case we get figure-eight knot}
    \label{fig4.2}
\end{figure}

\begin{theorem}
Under the assumptions of Theorem \ref{thm2}, the first return map in the case of the figure-eight knot includes two "hooks", one contains a path that connects the points $p^-, p^+, q^-$ and is contained in the image of $A$, and the second contains a path that connects the points $p^+, p^-, q^+$ and is contained in the image of $B$, see for example Figure \ref{fig1}.    
\end{theorem}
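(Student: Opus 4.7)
The plan is to combine continuity of the first return map with the structural constraints already established in Theorems \ref{thm1}, \ref{thm2}, and \ref{thm3}, and then to close the argument using the symmetry $(x,y,z)\mapsto(-x,-y,z)$ of the system.

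First I would recall that, by Theorem \ref{thm3}, in the figure-eight case $q^+$ lies in part 1 and hence in $A$, while $q^-$ lies in part 3 and hence in $B$; the singular points $p^-\in A$ and $p^+\in B$ sit on the boundary of $R$. Orbits in $A$ starting near $p^-$ spiral around $p^-$ and return to $R$ arbitrarily close to $p^-$, so in the limit $p^-$ is a boundary point of the image of $A$ under the return map, while $p^+$ is the return image of the interior point $q^+$. Continuity of the return map on $A$ then already yields a connected arc inside the image of $A$ joining $p^-$ to $p^+$.

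Next I would argue that this arc must extend past $p^+$ and pass through the point $q^-$. This is essentially the content of case (c) of Theorem \ref{thm3}: configurations (a) and (b) of Figure \ref{fig4.2}, in which the arc terminates elsewhere, produce the knot $9_{46}$, while case (c), in which the arc continues through part 2 into part 3 and reaches $q^-$, is the only configuration whose heteroclinic projection onto the plane $x=y$ yields the figure-eight. Because the first return map is a local diffeomorphism away from the stable manifold of the origin, this extended path varies continuously with the starting point in $A$, producing a genuine hook-shaped subset of the image of $A$ whose core arc passes through $p^-$, $p^+$, and $q^-$ in that order. The corresponding hook inside the image of $B$, containing a path through $p^+$, $p^-$, and $q^+$, follows immediately by applying the $(x,y,z)\mapsto(-x,-y,z)$ symmetry, which interchanges $A$ with $B$, $p^\pm$ with $p^\mp$, and $q^\pm$ with $q^\mp$.

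The main obstacle I expect is justifying rigorously that the hook from $A$ extends exactly through $q^-$ rather than stopping short of it or overshooting it. This amounts to translating the global topological constraint that the heteroclinic knot $\gamma$ be the figure-eight into a local geometric constraint on the shape of the image of $A$ inside $R$. The Theorem \ref{thm3} case analysis already does most of this work by eliminating the two competing configurations, but the present statement additionally requires pinning down the precise endpoint of the hook, which ultimately relies on the closing-up behaviour of the heteroclinic orbit through $q^-$ and its compatibility with the figure-eight crossing pattern observed in Figure \ref{fig4.2}(c).
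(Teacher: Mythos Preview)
Your argument correctly places $p^-$ and $p^+$ in the image of $A$: $p^-$ as a boundary fixed point and $p^+$ as the return image of the interior point $q^+\in A$. The symmetry step at the end is also fine. The gap is in the middle step, where you try to show that the hook from $A$ reaches $q^-$.

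You attempt to extract this from the case analysis of Theorem~\ref{thm3}, but that theorem only pins down \emph{where $q^\pm$ sit inside $R$}; it says nothing about where the image of $A$ under the return map terminates. The three cases in Figure~\ref{fig4.2} distinguish positions of $q^\pm$, not endpoints of hooks, so invoking case~(c) does not give you the conclusion you need. You flag this yourself as ``the main obstacle,'' and your proposed resolution via the $9_{46}$ elimination does not close it.

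The paper's proof supplies exactly the missing dynamical ingredient: points of $A$ lying close to the stable manifold of the origin flow near the origin and then shadow one branch of its one-dimensional unstable manifold, namely the heteroclinic orbit $\alpha^-$. By Theorem~\ref{thm1} (and the standing assumption of Theorem~\ref{thm2}) that branch first meets $R$ at $q^-$, so by continuous dependence on initial conditions these points are mapped arbitrarily close to $q^-$. Thus the ``tip'' of the image of $A$, coming from the boundary of $A$ along the stable manifold, is precisely $q^-$. Combining this with $p^-$ (the singular boundary point of $A$) and $p^+$ (the image of $q^+\in A$) gives the path $p^-,\,p^+,\,q^-$ inside the image of $A$. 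No appeal to the knot-type trichotomy is needed for this step; it is a local $\lambda$-lemma/continuity argument near the saddle at the origin.
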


\begin{proof}
    By Theorem \ref{thm1} and Theorem \ref{thm3}, there exist a heteroclinic orbit which flows from the origin to the point $q^- \in A$. Since the stable manifold of the origin intersects with A, and the flow is continuous, we conclude that points in $A$ near the stable manifold of the origin, are mapped to points near $q^-$. And as we have mentioned before, $p^-\in A$ is a singular point. This implies that the image of $A$ under the first return map, contains a path connecting the points $p^-, q^-$, see Figure \ref{fig4.3}.
    In addition, the point $q^+ \in A$, is mapped to $p^+ \in B$, which means that the point $p^+$ is also in the image of $A$.
    Therefore, there exists some "hook" in the image of $A$ which connects the points $p^\pm, q^-$, see for example the yellow "hook" in Figure \ref{fig1}. By the symmetry of the system, there exists also a symmetric "hook" in the image of $B$ which connects the points $p^\pm, q^+$.
\end{proof}

\begin{figure}[!h]
    \centering
    \includegraphics[width=7cm]{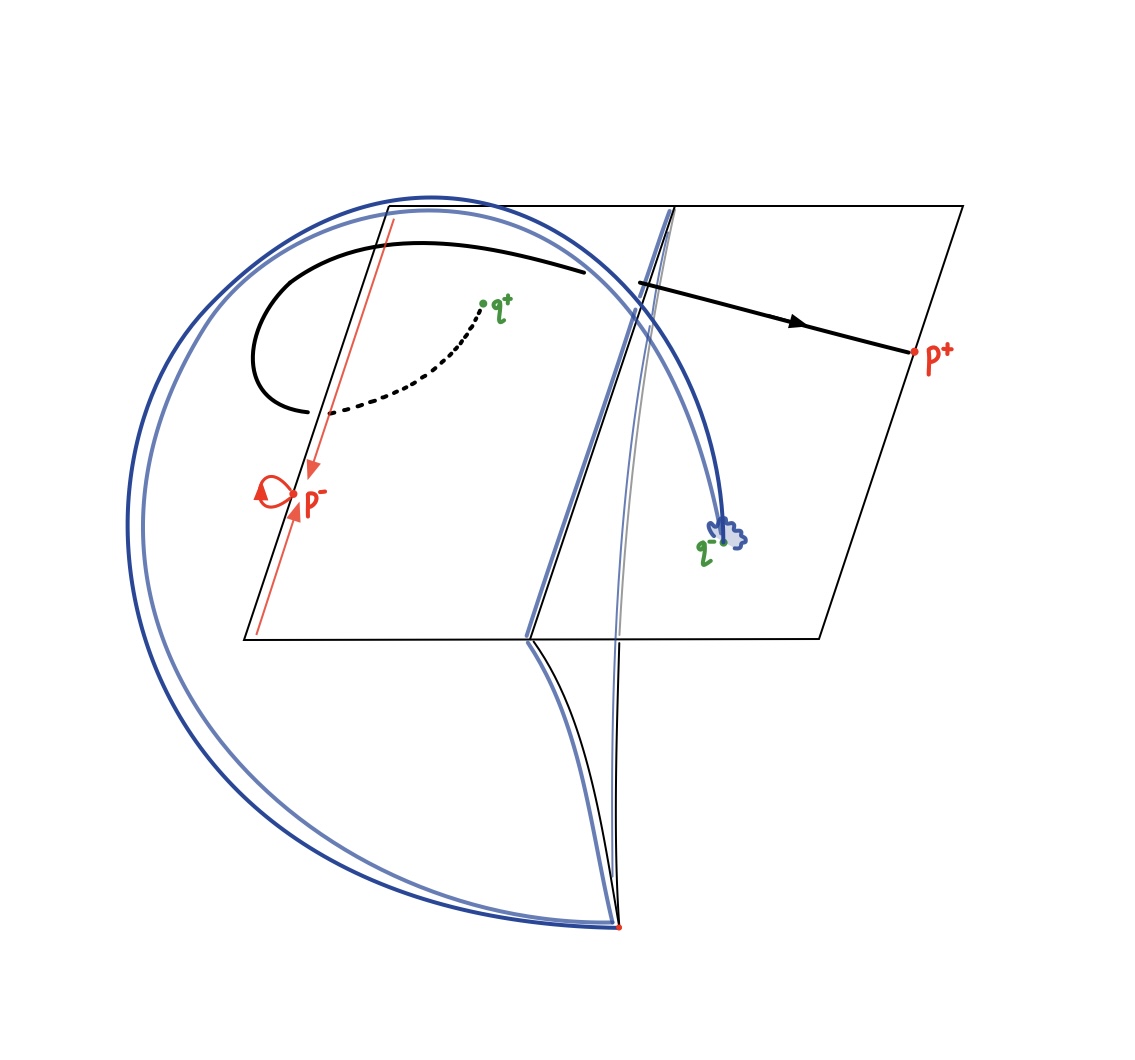}
    \caption{The properties of the heteroclinic orbits $\alpha^\pm$ implies the existence of two paths in the image of $R$ under the first return map, these paths connects the points $(p^\pm, q^+)$, $(p^\pm, q^-)$ .}
    \label{fig4.3}
\end{figure}

\section{Hyperbolic flow on the figure-eight complement}
\subsection{A New Flow}
Now we want to define a new flow, this flow was inspired by the first return map for Lorenz system at the figure-eight knot case. We will show that this flow is actually a hyperbolic plug with non-orientable foliations, which is defined on the figure-eight knot complement.\\
The new flow is defined on a rectangle $R$ which is divided into four open rectangles $A, B, C$ and $D$. The image of each under the flow which we denote by $\Phi$ is also a rectangle where its width expands by $\lambda= 1+\sqrt{3}$ and its height contracts by $\lambda$.\\
We blow up the rectangle $R$ and we get a three dimensional flow on a box were the lower base is divided into four open rectangles $A, B, C$, $D$ and the upper base is the image of the lower base under the flow defined above (see Figure \ref{fig6.2}).

\begin{figure}[!h]
    \centering
    \includegraphics[width=11cm]{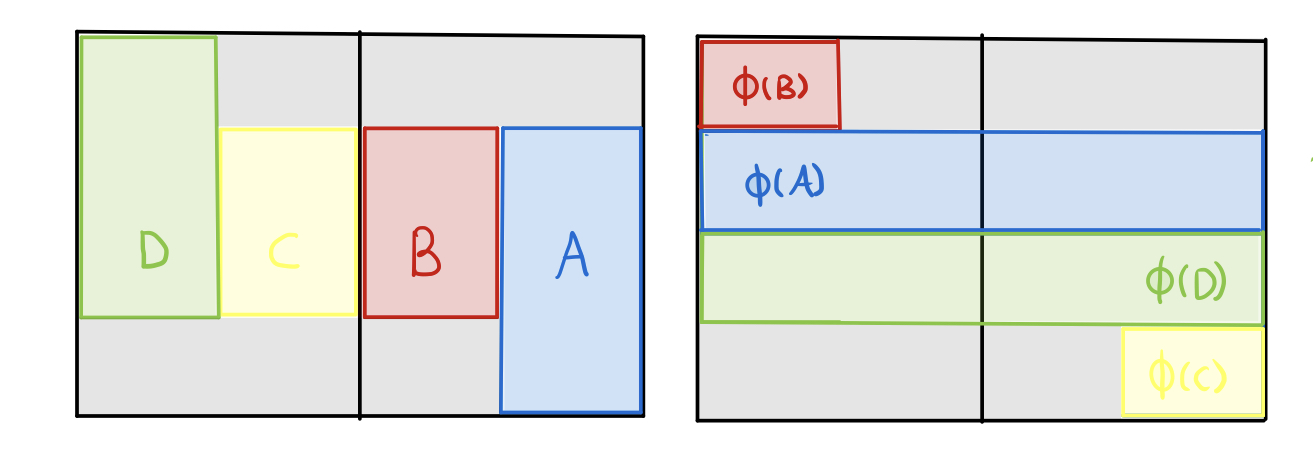}
    \caption{The Flow}
    \label{fig6.1}
\end{figure}

We want to show that this new flow is a hyperbolic plug, our flow is defined on an open set, therefore, first we need to define hyperbolic plug for a manifold with boundary.

\subsection{Hyperbolic Plugs}

\begin{figure}[!h]
    \centering
    \includegraphics[width=10cm]{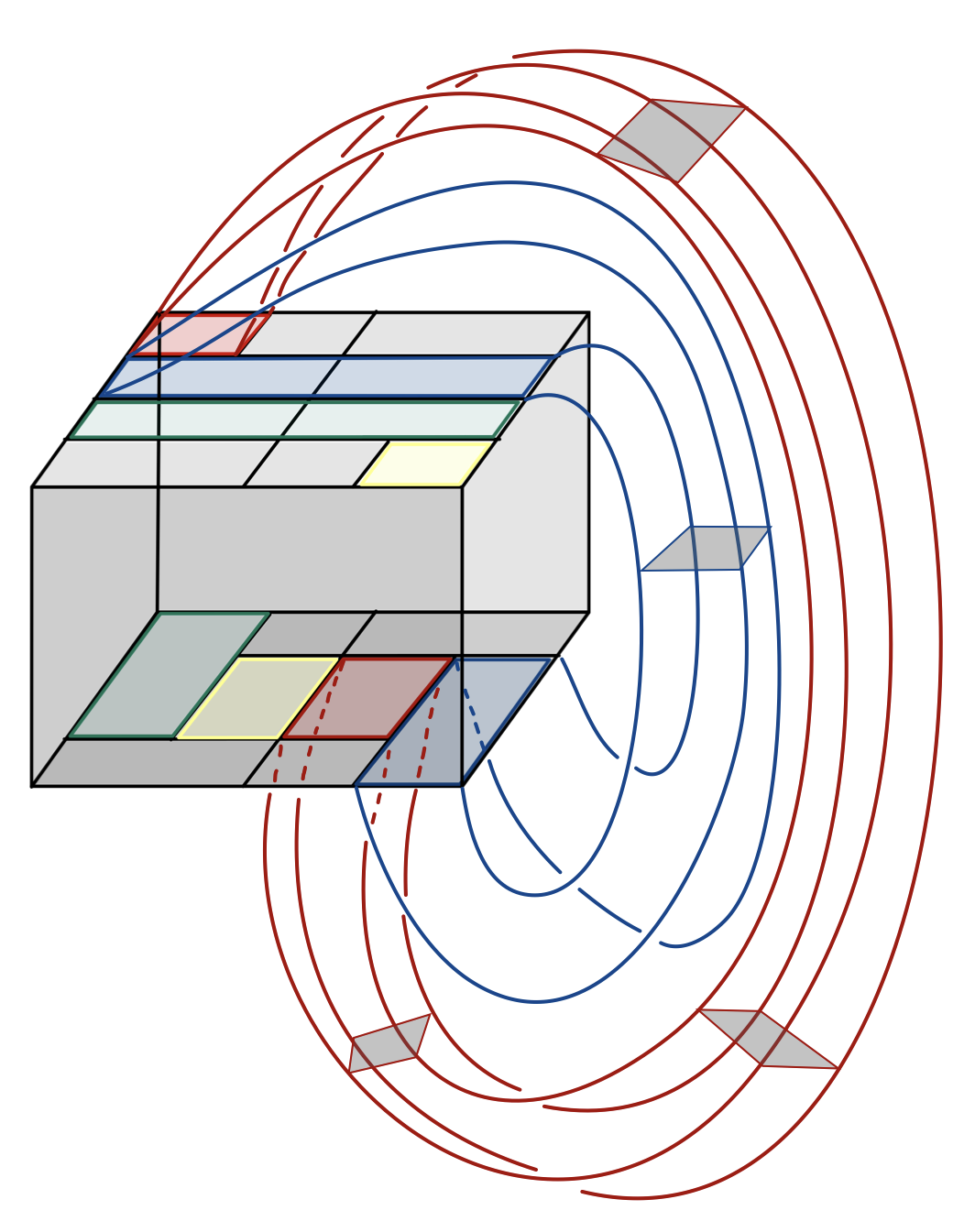}
    \caption{Hyperbolic flow}
    \label{fig6.2}
\end{figure}

\begin{definition} (quazi-transverse) The flow on the manifold is quazi-transverse to the flow on the boundary if they are transverse except for a finite number of tangent orbits. 
\end{definition} 

\begin{definition} (Hyperbolic plug)
Let $M$ be a smooth 3-manifold with boundary and let $\phi _ t : M \xrightarrow{} M$ be a flow generated by a non-singular $C^k$-vector field $X$, where $k \geq 1$. The flow is a hyperbolic plug if $TM$ admits a hyperbolic splitting, for some given Riemannian metric on $M$ and the flow on the boundary is transversal or quazi-transversal to the flow on the manifold.
\end{definition}

In the following section we will prove that the new flow is a hyperbolic plug and we will show that there exist at least two different hyperbolic flows on the figure-eight knot complement, before we do so we will introduce another known flow.

\subsection{The DA of the suspension of the cat map}
\begin{definition}
  Consider the two dimensional torus as the quotient space $\mathcal{R}^2/\mathcal{Z}^2$, Arnold's cat map $\Gamma : \mathcal{T}^2 \rightarrow \mathcal{T}^2$ is defined by $\Gamma ((x,y))= \begin{pmatrix}
        1&1\\
        1&2
    \end{pmatrix}
    \begin{pmatrix}
        x\\
        y
    \end{pmatrix} mod1$.\\
    The suspension of the cat map is a flow defined on the three manifold ${\mathcal{T}^2 \times I} / {(x,0) \sim (\Gamma(x),1)}$.
\end{definition}

The suspension of the cat map is a hyperbolic plug, it has one stable direction and one unstable direction. The DA or derived from Anosov construction is a procedure we do on the flow which destroys the stable direction in a small neighborhood of an orbit. The procedure is done in the following way: take a small tubular neighborhood which includes the orbit corresponding to the origin, and modify the flow inside the neighborhood in a way that turns the stable direction of the flow inside the neighborhood into an unstable one. 
This turns the orbit into a repelling orbit, and then one may remove a small neighborhood of the orbit such that the flow is transverse to its boundary.
The DA of the suspension of the cat map is a hyperbolic plug which is defined on figure-eight complement and has orientable foliations.
\newpage

\subsection{A non-orientable hyperbolic plug on the figure-eight knot complement}
\begin{theorem}
    The figure-eight knot complement has at least two non equivalent hyperbolic plugs. One with orientable foliations obtained as the DA of the suspension of the cat map, and the other with non-orientable foliations defined above.
\end{theorem}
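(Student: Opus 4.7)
The plan is to establish the theorem by separately verifying the hyperbolic-plug structure of each of the two flows, identifying their underlying 3-manifold as the figure-eight knot complement, and then distinguishing them by the orientability of their invariant foliations, which is a topological invariant of a hyperbolic plug.

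First I would pin down the new flow. Using the description of the box with four sub-rectangles $A,B,C,D$ expanded by $\lambda=1+\sqrt{3}$ in one direction and contracted by $\lambda$ in the transverse direction, I would write the local vector field and exhibit the continuous splitting $TM=E^s\oplus E^u\oplus E^c$ on the maximal invariant set, with $E^c$ tangent to the flow and the estimates for $E^s$, $E^u$ coming directly from the expansion/contraction rate $\lambda$. Then I would check that on each face of the box the flow is either transverse to the boundary or meets it in a finite collection of tangency orbits, so that the quasi-transversality condition of the definition is satisfied. This shows the new flow is a hyperbolic plug.

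Next I would argue that the ambient manifold is the figure-eight knot complement. The flow is built from the first-return-map model of Section~2 at the second T-point, where the heteroclinic knot $\gamma$ is the figure eight; the closed 3-manifold obtained by closing up the box along the gluings is $S^3$, and the orbits that escape to $\gamma$ correspond precisely to the removed figure-eight. Together with the corresponding statement for the DA of the suspension of the cat map (which is standard and cited above), this places both flows on $S^3\setminus K$ where $K$ is the figure-eight knot.

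Finally I would distinguish the two plugs. The suspension of Arnold's cat map has a splitting coming from the two real eigendirections of the matrix $\bigl(\begin{smallmatrix}1&1\\1&2\end{smallmatrix}\bigr)$, and these eigenspaces are globally oriented since the matrix has positive determinant; the DA construction is supported in a tubular neighborhood of the fixed orbit and preserves this global orientability, so the resulting plug has orientable foliations. For the new flow, I would trace the stable direction around a closed orbit that passes through the joining chart linking two of the sub-rectangles with opposite orientation: the gluing map that identifies the top of the box with the bottom reverses the sign of the stable vector along this loop, producing a M\"obius-type monodromy and hence non-orientable foliations. Since orientability of the invariant foliations is preserved under topological equivalence of hyperbolic flows, the two plugs are non-equivalent.

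The main obstacle I anticipate is the third step: rigorously exhibiting a loop in the new plug along which the stable (or unstable) foliation has non-trivial $\mathbb{Z}/2$ monodromy, and being careful that this monodromy is genuinely forced by the identification pattern of $A,B,C,D$ rather than an artifact of a choice of local frame. The hyperbolic splitting and the identification of the ambient manifold are comparatively routine once the gluing combinatorics of the box are fixed.
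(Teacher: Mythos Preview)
Your overall architecture matches the paper's: verify the hyperbolic-plug axioms for the new flow, identify the carrier manifold as the figure-eight complement, and separate the two plugs by orientability of the foliations.  The execution, however, diverges at the second and third steps, and the second step as you have written it contains a real gap.

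In the paper the Birman--Williams template is the organizing object for \emph{both} of the remaining steps.  After noting the $\lambda$-expansion/contraction and the quasi-transversality on the boundary (your step~1, essentially identical), the paper collapses the stable direction and obtains an embedded branched surface.  Non-orientability is then read off by exhibiting a non-orientable subtemplate --- a M\"obius band sitting inside the branched surface --- so the delicate monodromy computation you flag as the main obstacle is replaced by a direct visual argument on a $2$-complex.  Your proposed loop-and-frame argument is a legitimate alternative, but the template route makes it concrete and removes the worry about ``artifact of a choice of local frame'' that you yourself raise.

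The gap is in your identification of the ambient manifold.  The new flow is only \emph{inspired} by the Lorenz first-return map at the second $T$-point; it is defined abstractly as a box with a prescribed gluing of four sub-rectangles, not as a region of $S^3$ containing the heteroclinic knot $\gamma$.  Consequently the sentence ``the closed $3$-manifold obtained by closing up the box along the gluings is $S^3$, and the orbits that escape to $\gamma$ correspond precisely to the removed figure-eight'' is not justified: nothing in Section~2 tells you that the model box glues up to $S^3$, nor that its exit set is a figure-eight.  The paper instead argues topologically that a regular neighborhood of the embedded template is homeomorphic to the figure-eight knot complement, which is an argument purely about the branched surface and does not invoke the Lorenz equations at all.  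To repair your approach you would need an independent computation of the manifold produced by your gluing pattern --- and the most efficient such computation is precisely the template-neighborhood argument the paper uses.
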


\begin{proof}
    First, we will prove that the flow we defined is a hyperbolic plug. As we can see in Figure \ref{fig6.2}, on each rectangle we can define a foliation with transversal directions were one contracts by $\lambda$ and the other expands by $\lambda$.
    Hence, the flow admits a hyperbolic splitting. Moreover, on the boundary we have orbits which flow from $\infty$ and we have two orbits which are tangent to the flow, which means that the flow on the manifold is quazi-transverse to the flow on the boundary, therefore, the defined flow is a hyperbolic plug.\\
    Since the flow is a hyperbolic plug, it has a template carrying its periodic orbits. We can find such a template by collapsing the contracting direction, and we get the following template in Figure \ref{fig52}. This template is non-orientable since it has a non-orientable subtemplate as we can see in Figure \ref{fig6.3}, which implies that the new flow is aa hyperbolic plug with non-orientable foliations.\\
    \begin{figure}[h]
        \centering
        \includegraphics[width=7cm]{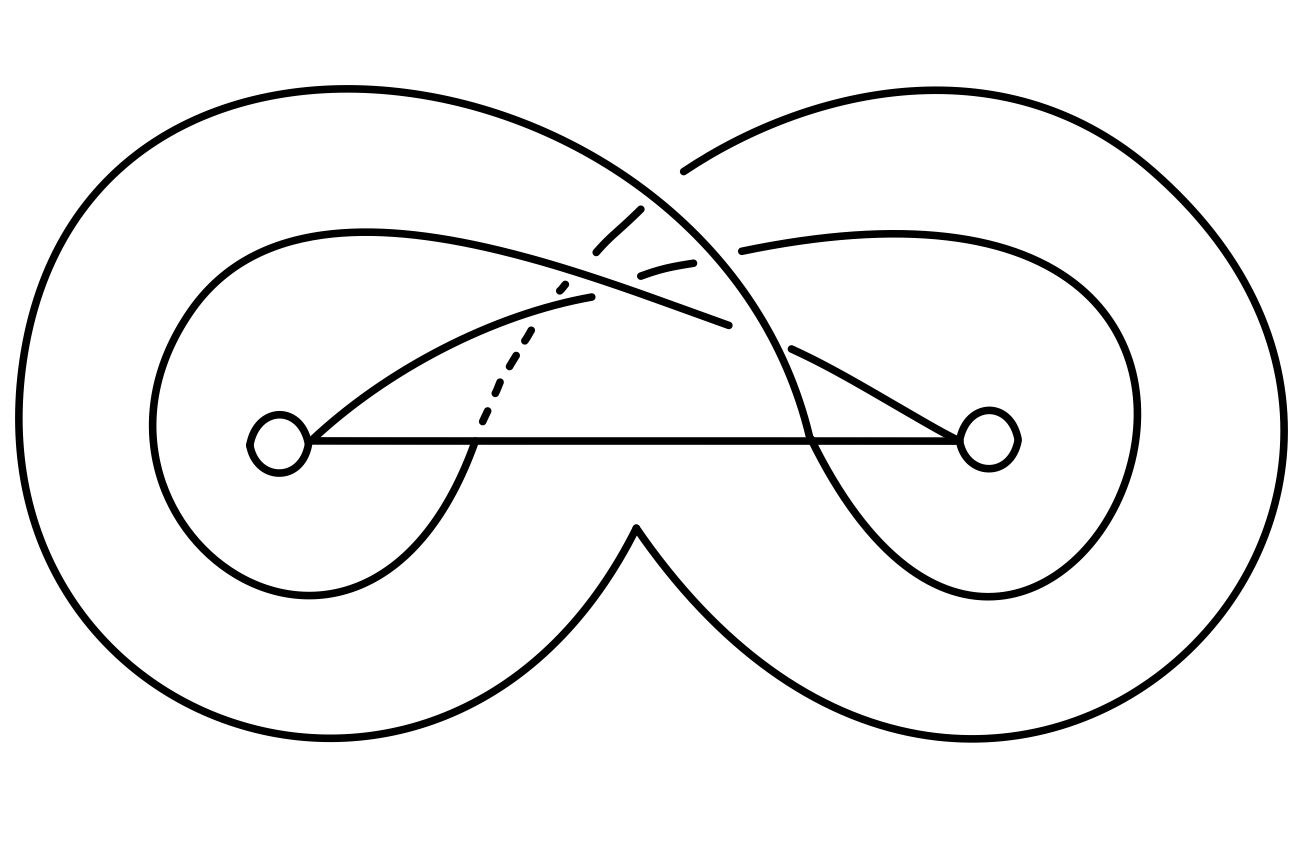}
        \caption{The template which carries the periodic orbits of the new hyperbolic plug}
        \label{fig52}
    \end{figure}
    We have proved above that the flow is a hyperbolic plug, the only thing left is to show that the flow is defined on the figure-eight knot complement. To do so, we have shown that the figure-eight knot is homeomorphic to a neighborhood of the template as can be seen in Figure \ref{fig6.4}.
    \begin{figure}
        \centering
        \includegraphics[width=10cm]{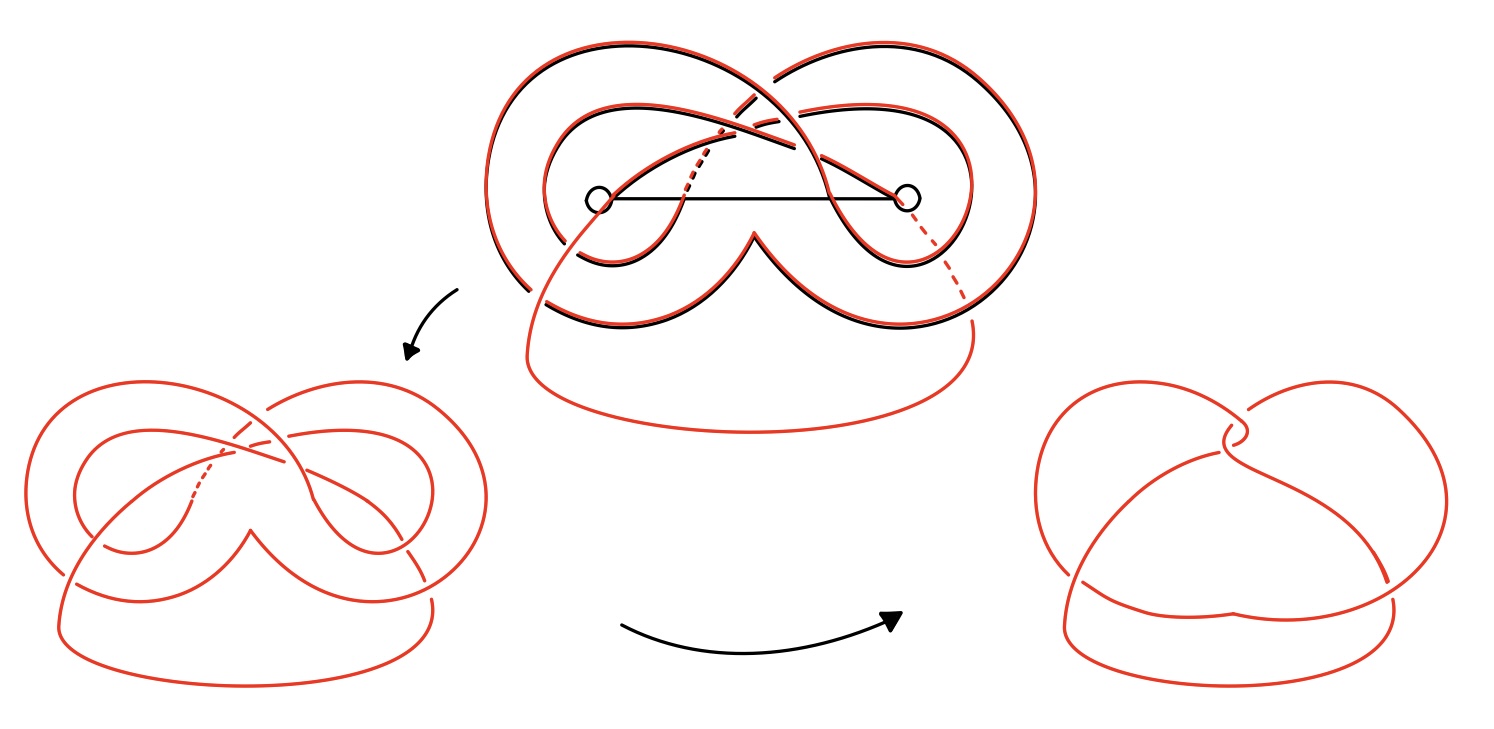}
        \caption{The figure-eight knot complement is homeomorphic to a neighborhood of the template}
        \label{fig6.4}
    \end{figure}
\end{proof}

\begin{figure}
    \centering
    \includegraphics[width=7cm]{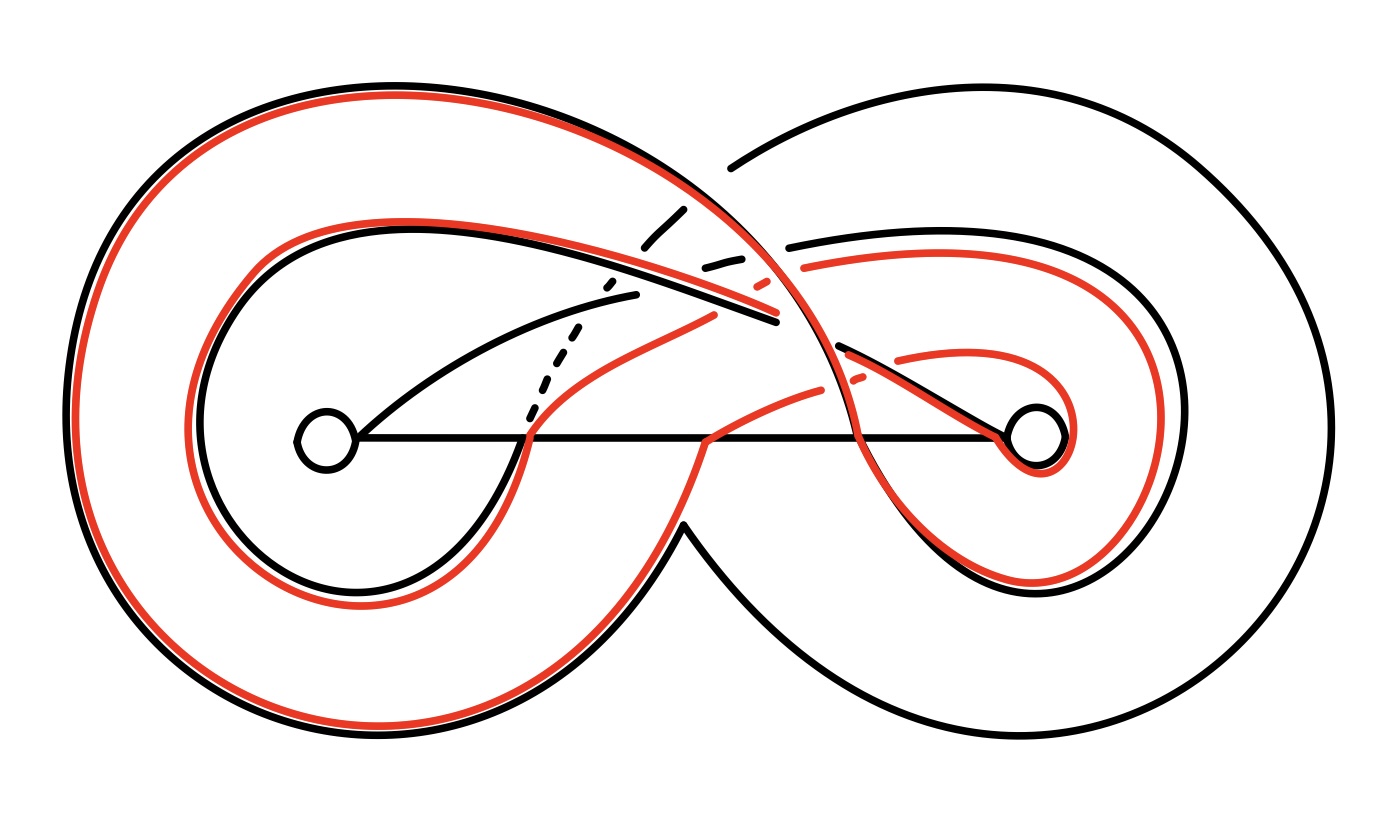}
    \caption{The template is non-orientable}
    \label{fig6.3}
\end{figure}

\section{The Template}
In this section we will discuss the properties of the template we found in the previous section, by Theorem \ref{bw}, the link of periodic orbits of the hyperbolic flow which we found, are in bijective correspondence with the link of the periodic orbits on the template.
It is easy to see that the Lorenz template \ref{fig5.3} is a subtemplate of the figure-eight template \ref{fig6} by considering only the two inner bands.
\begin{figure}[!h]
    \centering
    \includegraphics[width=8cm]{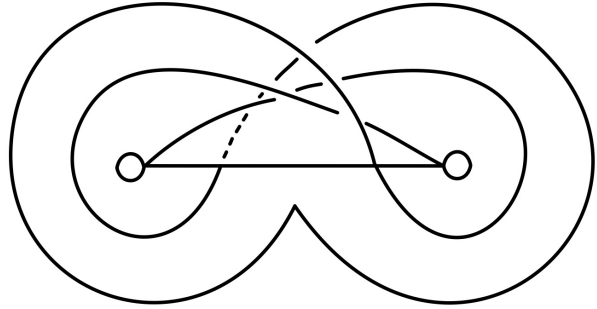}
    \caption{The Figure-eight Template}
    \label{fig6}
\end{figure}

\begin{theorem}
    All knots appearing on the figure-eight template are positive, fibered and prime.
\end{theorem}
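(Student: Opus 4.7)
The plan is to establish the three properties---positive, fibered, and prime---in sequence, by combining the positivity of the template in Figure \ref{fig6} with Birman--Williams (Theorem \ref{thmfk}) and Ozawa (Theorem \ref{thmpk}).

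First, I would verify that the template itself is positive by direct inspection of the projection in Figure \ref{fig6}: every crossing between the four bands $A, B, C, D$ appears with the same sign under the convention of Figure \ref{fig0.2}. Since every periodic orbit is carried by the template, its induced diagram contains only positive crossings, so every knot on the template is a positive knot. Collapsing the contracting direction of the semi-flow then presents each periodic orbit as the closure of a positive braid, whose letters are read off from the ordered sequence of bands the orbit visits; absence of negative generators is exactly the positivity observation above.

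For fibredness, a knot is automatically indecomposable (a one-component link cannot be non-trivially split), so Theorem \ref{thmfk} applies directly to the positive braid closure presentation and yields fibredness together with the genus formula $2g = c - n + 1$, where $c$ counts the positive crossings in the braid word and $n$ is the number of strands. For primeness, I would invoke Ozawa's theorem (Theorem \ref{thmpk}): a non-trivial positive knot is prime whenever its positive diagram is connected and prime. Connectedness of the diagram is immediate from the braid presentation, since the orbit visits its bands as a single connected word; the substantive content is diagram-primeness, which requires showing that no simple closed curve in the projection plane meets the knot in exactly two points with a trivial arc on one side.

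I would establish diagram-primeness by a geometric analysis of the four-band figure-eight template. Any hypothetical separating circle would have to cross between bands or through the branch-line region; because consecutive bands are glued with at least one positive crossing, both sides of any such circle necessarily contain an essential crossing, ruling out a trivial arc. The main obstacle is precisely this step, since positivity reduces to inspection and fibredness reduces mechanically to Theorem \ref{thmfk}, but diagram-primeness is a genuinely template-specific argument. I expect to handle it by enumerating the few local patterns of adjacent band-crossings in Figure \ref{fig6} and verifying in each pattern that no bigon-style decomposition is possible; for orbits whose underlying word lies entirely in the two inner bands, primeness of the Lorenz subtemplate (a classical result) provides a ready template for the argument.
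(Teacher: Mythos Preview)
Your overall architecture---positivity, then Theorem \ref{thmfk} for fibredness, then Ozawa (Theorem \ref{thmpk}) for primeness---matches the paper's. The substantive divergence, and the genuine gap, is at the very first step.

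You propose to read positivity directly off the projection in Figure \ref{fig6}. But the paper's template is explicitly non-orientable (it contains a M\"obius-band subtemplate, Figure \ref{fig6.3}), and in its native projection the band crossings are \emph{not} all positive. The paper does not argue positivity from Figure \ref{fig6}; instead it performs a template move---detaching the two outermost bands from the branch line, flowing them one step forward along the template, and re-gluing---followed by an ambient isotopy, to obtain an \emph{equivalent} template (Figure \ref{fig5.4}) whose projection is positive. Only then does the chain ``positive template $\Rightarrow$ positive knots $\Rightarrow$ closed positive braids $\Rightarrow$ fibred'' go through. Your proposal skips this move, so as written your positivity claim is unjustified and everything downstream (fibredness via Theorem \ref{thmfk}, primeness via Theorem \ref{thmpk}) is left without its hypothesis.

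A secondary point: your diagram-primeness plan (``enumerate the few local patterns of adjacent band-crossings'') is vague and would be awkward to carry out on the original template. The paper's argument is cleaner precisely because it works on the equivalent positive template of Figure \ref{fig5.4}: that template has a vertical axis of symmetry, and any putative connect-sum circle meeting the diagram in two points forces one summand to live entirely in one half, where it is visibly an unknotted arc. This structural feature of the \emph{equivalent} template is what makes visual primeness a one-line observation rather than a case analysis; without first passing to Figure \ref{fig5.4} you do not have access to it.
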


\begin{proof}
     The figure-eight template is equivalent to the template given in Figure \ref{fig5.4}, the equivalence is obtained by separating the two outermost bands of the template from the branch line, extend them one step further on the template, and glue them back to the branch line. Then by isotopy of the bands in $\mathbb{S}^3$, we get the equivalent template given in Figure \ref{fig5.4}. The equivalent template is positive, this implies that the template of the hyperbolic plug is positive, which means that all the knots that exist on the template are positive.

\begin{figure}[!h]
    \centering
    \includegraphics[width=8cm]{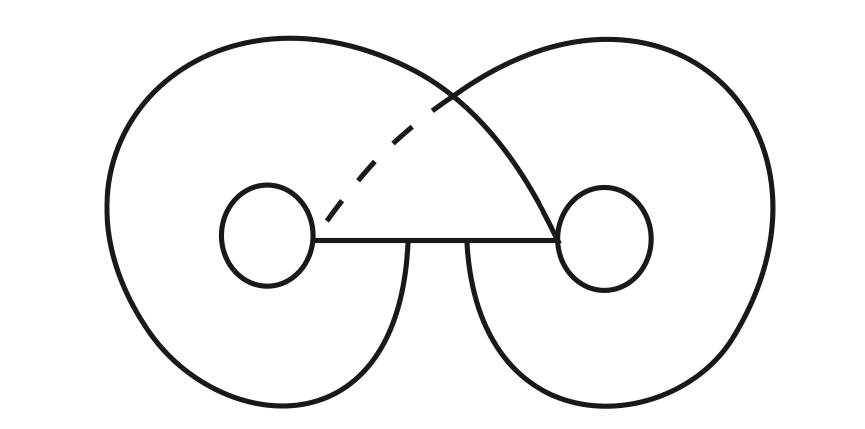}
    \caption{Lorenz Template}
    \label{fig5.3}
\end{figure}

\begin{figure}[!h]
    \centering
    \includegraphics[width=9cm]{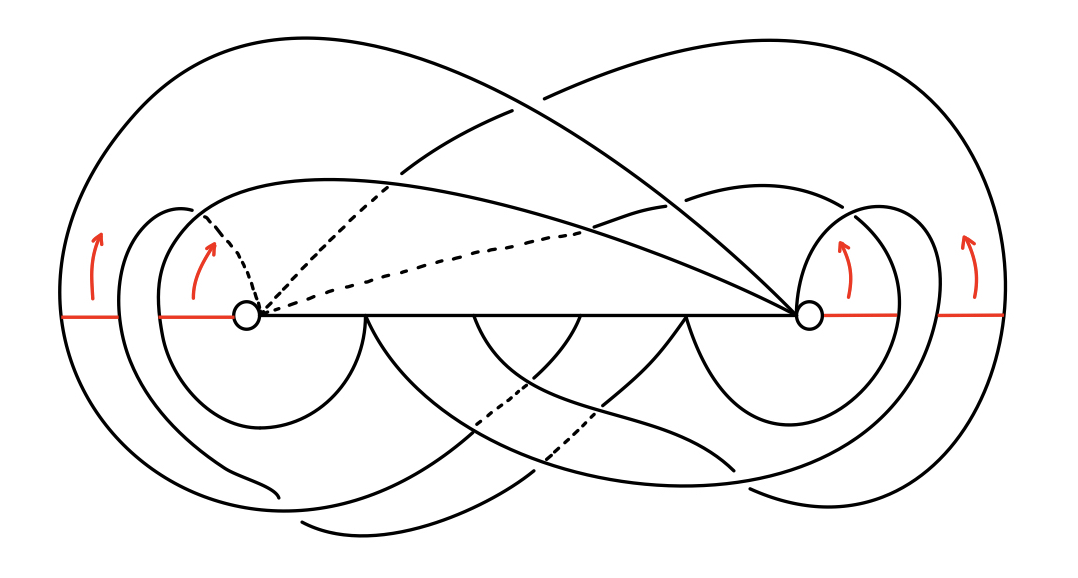}
    \caption{Equivalent Template}
    \label{fig5.4}
\end{figure}

By Theorem \ref{thmfk} closed positive braids are fibered links, therefore, by cutting the template along the red lines (see Figure \ref{fig5.5}), we can see that every link on the template can be represented as a closed positive braid, this implies that all the links on our template are fibered.

Let $K$ be a knot on the equivalent template, and let us assume that the diagram of the projection of $K$ can be represented as a connected sum of two knots $K=K_1 \sharp K_2$, then if we cut the template vertically in the middle, one of the knots $K_1, K_2$ lives only on one half of the template, which implies that this knot is the unknot. Therefore, all the knots on the template are visually prime, hence by Theorem \ref{thmpk}, all the knots on the template are prime.

\begin{figure}
    \centering
    \includegraphics[width=9cm]{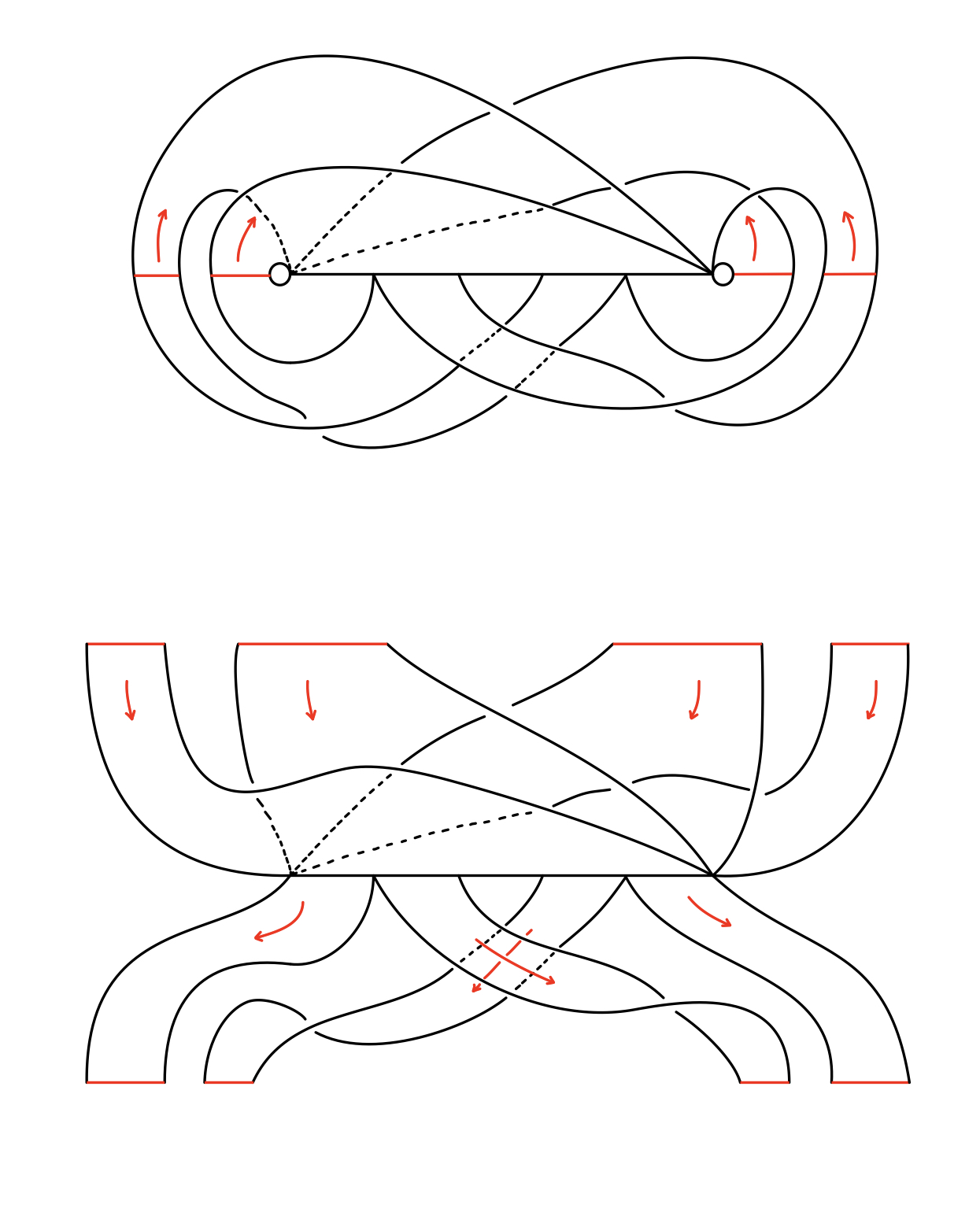}
    \caption{Cutting the template along the red lines gives a way of representing every link on the template as a closed braid}
    \label{fig5.5}
\end{figure} 
\end{proof}

\begin{conjecture}
Our template has more knots than the Lorenz template.
\end{conjecture}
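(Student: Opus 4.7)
The plan is to exhibit a single periodic orbit on the figure-eight template whose knot type is provably not realized on the Lorenz template. Because the Lorenz template embeds as the subtemplate consisting of the two inner bands (as noted at the start of this section), every Lorenz knot is automatically a figure-eight template knot, so strict containment follows from a single witness whose itinerary uses at least one of the two outer bands.

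First I would set up the symbolic coding on the equivalent template of Figure \ref{fig5.4}: label the four bands $L_1, L_2, R_2, R_1$, with the inner pair $L_2, R_2$ corresponding to the Lorenz subtemplate. From the Markov partition defined by the branch line, read off the transition graph and, using the fact (established in the previous theorem) that every periodic orbit closes to a positive braid, write down the map from an admissible periodic word to an explicit braid word in $B_n$. Lorenz knots are exactly the knot types realized by itineraries using only the letters $L_2, R_2$; I must produce an itinerary involving $L_1$ or $R_1$ whose closure is a knot not on that list.

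Next I would compute braid closures for a short list of candidate itineraries that include at least one outer letter, for example $L_1 R_1$, $L_1 L_2 R_2 R_1$, or $L_1 L_2 L_2 R_2 R_1 R_1$, and extract standard invariants (Alexander polynomial, signature, genus via the formula in Theorem \ref{thmfk}) directly from the braid word. For each candidate, compare the invariants to those of Lorenz knots of the same braid index and crossing number, using the tabulation of Lorenz knots going back to Birman--Williams \cite{Birman1983}. A single candidate whose invariants fail to match any Lorenz knot in its class finishes the proof.

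The hard part is certifying that the candidate is genuinely not Lorenz, since the Lorenz template carries an infinite and combinatorially rich family and numerical invariants of a specific candidate could, in principle, coincide with those of a large-period Lorenz orbit. To handle this I would lean on structural obstructions rather than invariants alone: Lorenz braids admit a canonical positive presentation whose underlying permutation has a very restricted form (a merge of one increasing and one decreasing sequence on the strands coming from the two Lorenz bands), and this restriction is preserved under isotopy of the closure as a positive braid of minimal index. I would verify that the braid arising from my chosen outer-band itinerary has permutation type incompatible with this Lorenzian form, which gives a purely combinatorial certificate that the resulting knot is not Lorenz and thus establishes the conjecture.
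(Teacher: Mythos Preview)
The paper offers no proof of this statement: it is recorded as a conjecture and left open. So there is nothing to compare your argument against on the paper's side; the question is whether your outline actually closes the gap.

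Your overall strategy---exhibit a single periodic orbit using an outer band whose knot type is not Lorenz---is the natural one, and the reduction to a witness is correct given that the Lorenz template sits inside as a subtemplate. The genuine problem is in your final paragraph. You propose to certify non-Lorenzness by checking that the positive braid word coming from your candidate itinerary has a permutation ``incompatible with the Lorenzian form,'' and you assert that ``this restriction is preserved under isotopy of the closure as a positive braid of minimal index.'' That assertion is doing all the work and is, as far as I know, not a theorem. A Lorenz knot can in principle admit positive braid presentations (even of minimal braid index) that are not themselves Lorenz braids; the Birman--Williams/Franks--Williams results tell you that the trip number equals the braid index, not that every minimal positive presentation is a Lorenz permutation. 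So observing that your particular braid word is not a Lorenz permutation does not rule out the existence of some other Lorenz orbit with the same closure. Without that rigidity statement your ``purely combinatorial certificate'' evaporates, and you are back to the invariant-matching approach you yourself flagged as insufficient against an infinite family.

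If you want to push this through, you would need either (i) a proof of the rigidity claim (that any minimal-index positive braid presentation of a Lorenz knot is conjugate to a Lorenz braid), which would be of independent interest, or (ii) an invariant that separates your candidate from \emph{all} Lorenz knots simultaneously---for instance a constraint on Alexander polynomials, signatures, or HOMFLY coefficients that holds for every Lorenz knot and fails for your orbit. Absent one of these, the argument as written does not prove the conjecture.
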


\begin{conjecture}
For any knot appearing as a periodic orbit on the template there exist a periodic orbit for the original equation at the figure-eight parameter with the same knot type.
\end{conjecture}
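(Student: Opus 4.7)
The plan is to combine the Birman--Williams template theorem (Theorem \ref{bw}) with a semiconjugacy between the first return map of the model flow and the first return map of the actual Lorenz flow at the figure--eight $T$-point parameter. The starting point is Section 2, which already extracts the combinatorial and geometric structure of the genuine first return map from the assumption that the figure--eight heteroclinic knot exists: two symmetric hooks, prescribed positions of $q^{\pm}$, and a division of $R$ by the stable manifold of the origin. The first step would be to refine this qualitative picture into a Markov partition of $R$ with exactly four sub-rectangles matching $A,B,C,D$ of Section 3, and to verify that the first return map permutes these rectangles according to the same transition graph as the model $\Phi$.

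The second step is to establish that the genuine first return map is uniformly hyperbolic on the invariant set of orbits that never escape this partition, with an invariant splitting into expanding horizontal cones and contracting vertical cones. Granting this, the symbolic dynamics on the return map is a subshift of finite type whose transition matrix matches that of the model. Since any periodic itinerary in a uniformly hyperbolic system is realized by a unique periodic orbit, we would obtain, for each periodic orbit of the template, a genuine periodic orbit of the Lorenz flow whose itinerary through $A,B,C,D$ is the same word.

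The next step is to verify that knot types are preserved. By Theorem \ref{bw}, collapsing the stable foliation in a small tubular neighborhood of the hyperbolic chain-recurrent set produces a template, and this collapse is an ambient isotopy on every finite sublink of periodic orbits. Because the template in Section 3 was built by collapsing the stable direction of $\Phi$, one would check that the template produced by collapsing the stable foliation of the genuine return map is ambient isotopic in $S^3$ to the figure--eight template of Section 3. The crossing information of any braid word in $A,B,C,D$ therefore agrees for the model and for the real flow, so Theorem \ref{bw} matches knot types orbit by orbit.

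The main obstacle, and the reason this is stated as a conjecture rather than a theorem, is the hyperbolicity step. At a $T$-point the chain-recurrent set contains the singular points $0,p^{\pm}$ together with heteroclinic connections between them, so the invariant set is at best singular hyperbolic rather than hyperbolic in the classical sense; Theorem \ref{bw} does not apply directly. One would either have to extend Birman--Williams to the singular hyperbolic setting (in the spirit of the work of Pinsky on the Lorenz attractor), or work with a desingularization in which the singular points are replaced by blown-up boundary components, thus fitting the hyperbolic plug framework of Section 3. A secondary difficulty is ruling out ``phantom'' periodic orbits introduced by the desingularization which would not correspond to genuine orbits of the original flow; this requires careful control of the return map on a neighborhood of the boundary pieces corresponding to $p^{\pm}$ and to the origin, as well as a rigorous verification (likely with computer assistance along the lines of Tucker's work) that the true Lorenz return map at the second $T$-point has precisely the four-rectangle hook structure posited by the model.
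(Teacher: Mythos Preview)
The paper states this as a \emph{conjecture} and offers no proof whatsoever; there is therefore no proof in the paper to compare your proposal against. Your write-up is not really a proof either, and you are honest about this: you lay out a plausible programme (build a Markov partition on $R$ matching $A,B,C,D$, establish hyperbolicity of the return map on the maximal invariant set, invoke Theorem~\ref{bw}, and check that the resulting template is ambient isotopic to the model template), and then correctly identify the two genuine obstructions --- the singular-hyperbolic nature of the chain-recurrent set at a $T$-point, which blocks a direct application of Birman--Williams, and the need for a rigorous (possibly computer-assisted) verification that the true return map has exactly the four-rectangle hook structure of the model. These are precisely the reasons the authors leave the statement as a conjecture, so your analysis is appropriate and well-targeted; it just does not constitute a proof, and neither does anything in the paper.
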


\bibliographystyle{plain}
\bibliography{references/References.bib}
\newpage

\end{document}